\newtheorem{theorem}{Theorem}[section]
\newtheorem{lemma}[theorem]{Lemma}
\newtheorem{corollary}[theorem]{Corollary}
\newtheorem{definition}[theorem]{Definition}
\newtheorem{example}{Example}[section]
\newtheorem{observation}[theorem]{Observation}
\title{Semiorders induced by uniform random points}
\author{Csaba Bir\'o and Caroline E.~Boone} 
\address{Department of Mathematics, University of Louisville, Louisville, KY 40220}
\keywords{random, semiorders, representation, up/down numbers}
\begin{document}

\begin{abstract}
We study semiorders induced by points drawn from a uniform random distribution. Of particular interest in this paper are the probabilities of generating specific semiorders and the equivalence classes they produce. We present a method for calculating the asymptotic probability of inducing these semiorders and describe the qualities which group these semiorders together. We also find a class of semiorders, called \emph{ladders}, whose probabilities can be calculated using the Up/Down numbers.
\end{abstract}

\maketitle

\section{Introduction}

In this paper we explore semiorders induced by points sampled from a uniform random distribution. More specifically, we explore the asymptotic behavior of the probabilities as the underlying interval of the uniform distribution increases in length.

Our work is inspired by that of Kozie\l\ and Sulkowska \cite{uniformrandom} on the uniform generation of general posets as well as that of \L uczak \cite{Luczak} on the first order properties of random posets.  

In the rest of this section we establish some basic definitions related to posets. Many of these concepts and definitions can be found in \cite{trotter}. Note that throughout this paper we will often refer to the poset and its underlying set interchangeably. For example, if $P=(X,<)$ and $x\in X$, then we may also say $x\in P$. It is oftentimes useful to discuss the sets of elements which are less than, greater than, or incomparable to some element in $P$.
\begin{definition}
    Let $x$ be an element of a poset $P$. The \emph{downset} of $x$ with respect to $P$ is defined
    \[D_P(x) = \left\{y\in P\colon\,y<x\right\}\]
    Similarly the \emph{upset} of $x$ with respect to $P$ is defined
    \[U_P(x) = \left\{y\in P\colon\,y>x\right\}\]
    Finally, the \emph{incomparable set} of $x$ with respect to $P$ is defined
    \[I_P(x) = \left\{y\in P\colon\,y\|x\right\}\]
\end{definition}
When the poset is clear in context, we may forgo the subscript and write $D(x)$, $U(x)$, and $I(x)$ to denote the downset, upset, and incomparable set of $x$ in $P$, respectively. 
\begin{definition}
    Let $x$ and $y$ be distinct elements of a poset $P$. If $D_P(x)=D_P(y)$ and $U_P(x)=U_P(y)$ then $x$ and $y$ are \emph{twins}.
\end{definition}

\begin{definition}
    Let $Q=(X_Q,<_Q)$ be a poset and $\{P_x\}_{x\in Q}=\{(X_x,<_x)\}_{x\in Q}$ be a set of posets indexed by the elements of $Q$. Define the set $X = \bigcup_{x\in Q}{X_x}$. Define also the relation $<$ as follows:
    \begin{quote}
        Let $x_1,x_2\in Q$. Let $y_1\in X_{x_1}$ and $y_2\in X_{x_2}$. Then $y_1<y_2$ if 
        \begin{enumerate}
            \item $x_1= x_2$ and $y_1<_{x_1} y_2$ or
            \item $x_1 <_Q x_2$
        \end{enumerate}
    \end{quote}
    \noindent The \emph{lexicographic sum} of $\{P_x\}_{x\in Q}$ over $Q$ is the poset $P = (X,<)$.
\end{definition}
\begin{definition}
    For posets $P$ and $Q$ the \emph{graph sum} of $P$ and $Q$, denoted $P+Q$, is defined as the lexicographic sum of $P$ and $Q$ over the $2$-element antichain.
\end{definition}
\begin{definition}
    A poset $P$ is an \emph{interval order} if there exists a \emph{representation function} $I$ assigning to each $x$ in $P$ a closed interval $I(x)=[\ell_x,r_x]$ on the real line, $\mathbb{R}$, so that $\ell_x>r_y$ if and only if $x<y$ in $P$.
\end{definition}
\begin{theorem}[Fishburn's Theorem]\cite{fishburn} 
    A poset $P$ is an interval order if and only if it does not contain $\mathbf{2}+\mathbf{2}$ as a subposet.
\end{theorem}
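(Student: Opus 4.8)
The plan is to prove the two implications separately, expecting the forward implication to be a short computation and the converse to contain all of the real work.

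For the easy direction, suppose $P$ has an interval representation $I(x)=[\ell_x,r_x]$ and, toward a contradiction, contains a copy of $\mathbf{2}+\mathbf{2}$, say $a<b$ and $c<d$ with the four cross pairs incomparable. I would read off inequalities directly from the defining equivalence $x<y \iff \ell_x>r_y$: the relations $a<b$ and $c<d$ give $\ell_a>r_b$ and $\ell_c>r_d$, while the incomparabilities $a\|d$ and $c\|b$ give $\ell_a\le r_d$ and $\ell_c\le r_b$. Combining these with the trivial bounds $\ell\le r$ produces the cyclic chain $\ell_a>r_b\ge\ell_c>r_d\ge\ell_a$, hence $\ell_a>\ell_a$, a contradiction. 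So an interval order is $\mathbf{2}+\mathbf{2}$-free.

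For the converse I would first isolate the structural heart of the theorem as a lemma: $P$ is $\mathbf{2}+\mathbf{2}$-free if and only if the family of downsets $\{D(x):x\in P\}$ is totally ordered by inclusion. The interesting direction is the contrapositive. If $D(x)$ and $D(y)$ are $\subseteq$-incomparable, I would choose witnesses $u\in D(x)\setminus D(y)$ and $v\in D(y)\setminus D(x)$, so that $u<x$ and $v<y$ while $u\not<y$ and $v\not<x$. A short case analysis, each case closed by transitivity together with the chosen memberships (for instance, $u<v$ would force $u<v<y$, contradicting $u\not<y$), rules out every comparability among the pairs $\{u,v\}$, $\{u,y\}$, $\{v,x\}$, $\{x,y\}$; hence $u,x,v,y$ induce a copy of $\mathbf{2}+\mathbf{2}$. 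I expect this case analysis to be the main obstacle, since it is where the combinatorial content of the theorem actually lives.

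With the lemma in hand, the representation is built by bookkeeping on the chain of downsets (taking $P$ finite, as in the setting of this paper). List the distinct downsets as $\emptyset=L_0\subsetneq L_1\subsetneq\cdots\subsetneq L_m$, and for each $x$ let $d(x)$ be the index with $D(x)=L_{d(x)}$ and let $t(x)=\min\{i:x\in L_i\}$, with $t(x)=m+1$ when $x$ lies in no downset. Because the $L_i$ are nested, membership is monotone, so $x<y$ (equivalently $x\in D(y)=L_{d(y)}$) holds precisely when $t(x)\le d(y)$. Setting $r_x=t(x)$ and $\ell_x=d(x)+1$ then gives intervals that are well formed---one checks $\ell_x\le r_x$ from the fact that $x\notin D(x)$ forces $t(x)>d(x)$---and that satisfy $x<y \iff r_x<\ell_y$. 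This is the left--right mirror of the paper's convention, so reflecting every interval across a point (or, symmetrically, running the same construction on the chain of upsets) yields a representation in the stated form $\ell_x>r_y\iff x<y$, completing the proof.
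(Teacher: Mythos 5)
Your proposal is correct, but there is nothing in the paper to compare it against: the paper states this result as Fishburn's Theorem with a citation to the literature and supplies no proof of its own, so the comparison can only be to the standard argument---which is essentially what you have reconstructed. Your forward direction correctly adapts the usual four-inequality contradiction to the paper's mirrored convention ($x<y$ iff $\ell_x>r_y$); note that the cycle $\ell_a>r_b\ge\ell_c>r_d\ge\ell_a$ uses only the two chain relations and the two non-relations $a\not<d$, $c\not<b$, so the ``trivial bounds $\ell\le r$'' you invoke are never actually needed. Your key lemma for the converse---that $\mathbf{2}+\mathbf{2}$-freeness is equivalent to the downsets being totally ordered by inclusion---is exactly the classical route, and its contrapositive case analysis is sound; it also subsumes (and upgrades to an equivalence) the Keller--Trotter statement that the paper quotes immediately after Fishburn's Theorem, namely that the downsets of an interval order are linearly ordered by inclusion. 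The integer-endpoint construction $r_x=t(x)$, $\ell_x=d(x)+1$ is valid, and you are right to restrict to finite $P$, since the theorem for arbitrary infinite posets requires representations in general linear orders rather than $\mathbb{R}$. Two small points to tighten in a final writeup: you should verify that $u,x,v,y$ are four distinct elements (routine, in the same style as your cases; e.g.\ $u=y$ would give $v<y<x$, contradicting $v\not<x$), and you should note that $\emptyset$ does occur among the downsets of a finite nonempty poset, so that your indexing $L_0=\emptyset$ is legitimate.
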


\begin{theorem}\cite{kellertrotter}
    Let $P$ be an interval order. Define the set of downsets of $P$ as
    \[D=\left\{D_P(x)\colon\, x\in P\right\}\]
    The set $D$ is totally ordered by by set inclusion. The set of upsets is similarly ordered by inclusion.
\end{theorem}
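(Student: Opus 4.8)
The plan is to reduce the total-orderedness of the downsets to the total order of the real line, using the interval representation that the definition of interval order supplies. First I would fix a representation function $I$ with $I(x)=[\ell_x,r_x]$ for each $x\in P$, so that by definition $a<x$ holds exactly when $\ell_a>r_x$. Reading the downset off from this, I get
\[
D_P(x)=\{a\in P : a<x\}=\{a\in P : \ell_a>r_x\},
\]
so that $D_P(x)$ is completely determined by the single real number $r_x$: the larger the right endpoint $r_x$, the more elements $a$ are excluded from the downset.

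The key step is then a monotonicity observation. Given two elements $x,y\in P$, the reals $r_x$ and $r_y$ are comparable, so assume without loss of generality that $r_x\le r_y$. If $a\in D_P(y)$, i.e.\ $\ell_a>r_y$, then $\ell_a>r_y\ge r_x$ forces $a\in D_P(x)$; hence $D_P(y)\subseteq D_P(x)$. Since $x,y$ were arbitrary, any two downsets are comparable, which is exactly the claim that $D$ is totally ordered by inclusion. The statement for upsets follows by the dual computation $U_P(x)=\{a\in P : \ell_x>r_a\}=\{a\in P : r_a<\ell_x\}$, which shows $U_P(x)$ depends only on the left endpoint $\ell_x$ and is monotone in it; comparing $\ell_x$ and $\ell_y$ then yields comparability of the upsets.

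I do not expect a genuine obstacle here, since the argument merely transports the order of $\mathbb{R}$ along the representation; the only point requiring care is to keep the strict and nonstrict inequalities straight (the defining relation is strict, while the comparison $r_x\le r_y$ is nonstrict) and to confirm that the orientation in the paper's definition, $x<y\iff\ell_x>r_y$, does not flip the direction of the inclusion.

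Alternatively, I would give a representation-free proof directly from Fishburn's Theorem, which is the step I would expect to be the most delicate. Suppose some pair of downsets were incomparable; then there are witnesses $a\in D_P(x)\setminus D_P(y)$ and $b\in D_P(y)\setminus D_P(x)$, so that $a<x$, $b<y$, $a\not<y$, and $b\not<x$. A short transitivity check shows $a\|b$, since each of $a<b$, $b<a$, and $a=b$ would propagate to $a<y$ or $b<x$; the same idea rules out $a>y$, $b>x$, $a=y$, and $b=x$, and forces $x\|y$. The delicate part is precisely this bookkeeping: once it is complete, the chains $a<x$ and $b<y$ together with the four established incomparabilities exhibit a copy of $\mathbf{2}+\mathbf{2}$, contradicting Fishburn's Theorem and completing the proof.
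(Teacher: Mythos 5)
Your proposal is correct. Note that the paper itself gives no proof of this statement -- it is quoted as a known background result with a citation to Keller--Trotter -- so there is no in-paper argument to compare against. Both of your routes are sound: the representation-based argument correctly observes that $D_P(x)=\{a\in P:\ell_a>r_x\}$ depends monotonically on the single real number $r_x$ (under the paper's stated orientation of the definition), so comparability of downsets is inherited from comparability of reals; and your alternative Fishburn-based argument is also complete, since the bookkeeping you sketch does go through -- incomparability of $D_P(x)$ and $D_P(y)$ yields witnesses $a<x$, $b<y$ with every cross-pair among $\{a,x\}\times\{b,y\}$ incomparable, producing a $\mathbf{2}+\mathbf{2}$. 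Two small remarks. First, you are right to flag the orientation: the paper's definition reads ``$\ell_x>r_y$ if and only if $x<y$,'' which is the reverse of the convention the paper actually uses later ($[a,b]<[c,d]$ if $b<c$), almost certainly a typo; your argument is insensitive to which convention is used, since either way the downset is a monotone function of one endpoint. Second, the Fishburn route has the advantage of being representation-free and of using only machinery already quoted in the paper, whereas the endpoint argument is closer to the standard textbook proof and generalizes immediately to the statement about upsets by symmetry.
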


\begin{definition}
    An interval order $P$ is a \emph{semiorder} if there exists a representation function $I$ such that $I(x) = [\ell_x,\ell_x+1]$ for all $x\in P$.
\end{definition}

\begin{theorem}\cite{scottsuppes}
    A poset $P$ is a semiorder if and only if it contains neither $\mathbf{2}+\mathbf{2}$ nor $\mathbf{3}+\mathbf{1}$ as a subposet.
\end{theorem}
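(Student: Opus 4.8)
The plan is to prove both directions, using Fishburn's Theorem and the cited Keller--Trotter result as the two workhorses. For the forward direction, suppose $P$ is a semiorder with unit representation $I(x)=[\ell_x,\ell_x+1]$. Since a semiorder is by definition an interval order, Fishburn's Theorem already excludes $\mathbf{2}+\mathbf{2}$, so only $\mathbf{3}+\mathbf{1}$ remains. Here I would argue by contradiction: if $a<b<c$ and $d$ is incomparable to each of $a,b,c$, then the chain forces $\ell_a>\ell_b+1>\ell_c+2$ under the representation convention $x<y\iff \ell_x>\ell_y+1$, while the incomparabilities $d\,\|\,a$ and $d\,\|\,c$ give $|\ell_d-\ell_a|\le 1$ and $|\ell_d-\ell_c|\le 1$, whence $\ell_a-1\le\ell_d\le\ell_c+1$ and so $\ell_a\le\ell_c+2$. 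These inequalities conflict, ruling out $\mathbf{3}+\mathbf{1}$. This half is a short arithmetic computation.

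For the reverse direction, assume $P$ contains neither $\mathbf{2}+\mathbf{2}$ nor $\mathbf{3}+\mathbf{1}$. By Fishburn's Theorem $P$ is an interval order, so the Keller--Trotter result applies: the downsets $\{D_P(x)\}$ form a chain under inclusion, and so do the upsets $\{U_P(x)\}$. The organizing device I would introduce is the relation $x\lesssim y$ defined by $D_P(x)\subseteq D_P(y)$ \emph{and} $U_P(x)\supseteq U_P(y)$. This is reflexive and transitive, its symmetric part is exactly the twin relation from the earlier definition, and it refines $P$ (if $x<y$ then $D_P(x)\subsetneq D_P(y)$ and $U_P(x)\supsetneq U_P(y)$). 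The aim is to promote an arbitrary interval representation into a unit one, using $\lesssim$ as the left-to-right ordering of the intervals.

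The structural heart of the argument is to show that $\lesssim$ is a \emph{total} preorder, and this is precisely where the absence of $\mathbf{3}+\mathbf{1}$ is indispensable. I would prove the contrapositive. Suppose $x,y$ are $\lesssim$-incomparable; since the downsets form a chain we may assume $D_P(x)\subseteq D_P(y)$, and equality is impossible (the upsets also form a chain, so $U_P(x)$ and $U_P(y)$ would then be comparable, making $x,y$ $\lesssim$-comparable). Hence $D_P(x)\subsetneq D_P(y)$, and $\lesssim$-incomparability together with the chain of upsets forces $U_P(x)\subsetneq U_P(y)$. Choosing witnesses $u\in D_P(y)\setminus D_P(x)$ and $w\in U_P(y)\setminus U_P(x)$ yields a three-element chain $u<y<w$; a short check, using that $D_P(\cdot)$ and $U_P(\cdot)$ are disjoint and that $y<x$ or $x<y$ would violate the strict inclusions $D_P(x)\subsetneq D_P(y)$ and $U_P(x)\subsetneq U_P(y)$, shows that $x$ is incomparable to each of $u$, $y$, and $w$. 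This is a copy of $\mathbf{3}+\mathbf{1}$, a contradiction, so $\lesssim$ is total.

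It then remains to convert the total preorder $\lesssim$ into a unit representation. Ordering the twin-classes as $C_1\prec\cdots\prec C_k$ and giving all members of a class a common value, I would assign real left endpoints $\ell$ so that every comparable pair $x<y$ satisfies $\ell_y-\ell_x>1$ while every incomparable pair satisfies $|\ell_y-\ell_x|\le 1$. The natural formulation is as a system of difference constraints along the $\lesssim$-order, and its feasibility is exactly what must be established; this is the step I expect to be the main obstacle, since a naive ``gap one or two between consecutive classes'' rule is too rigid once several mutually incomparable classes cluster inside a single unit window. I would prove feasibility either by a shortest-path/no-bad-cycle argument on the constraint graph, showing that any infeasibility cycle manifests as a forbidden $\mathbf{2}+\mathbf{2}$ or $\mathbf{3}+\mathbf{1}$, or by induction on $|P|$ that deletes a $\lesssim$-maximal element (which is automatically maximal in $P$, since $\lesssim$-maximality forces $U_P(x_0)=\varnothing$) and reinserts it with a globally re-adjusted endpoint. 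Either route reduces the construction to finitely many comparisons controlled by the chain structure and the two excluded posets, completing the proof.
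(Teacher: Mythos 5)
The paper itself offers no proof of this theorem (it is cited to Scott and Suppes), so your attempt must stand on its own. The forward direction you give is correct and complete. Your reduction of the hard direction to totality of the trace preorder (call it $\preceq$, where $x\preceq y$ means $D_P(x)\subseteq D_P(y)$ and $U_P(x)\supseteq U_P(y)$) is also correct and is a genuinely good organizing step: if $x,y$ are $\preceq$-incomparable, then both inclusions $D_P(x)\subset D_P(y)$ and $U_P(x)\subset U_P(y)$ are proper, the witnesses $u\in D_P(y)\setminus D_P(x)$ and $w\in U_P(y)\setminus U_P(x)$ form a chain $u<y<w$, and $x$ is incomparable to all three, which is a copy of $\mathbf{3}+\mathbf{1}$. (Two harmless slips: you state the final upset inclusion with its orientation reversed relative to what you actually derived, and you silently switch from the paper's representation convention $x<y\iff\ell_x>r_y$ to its mirror image in the last step.)

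The genuine gap is that the final step --- converting the total preorder $\preceq$ into a unit representation --- is never proved; you name two candidate strategies and defer both, and that step is the actual content of the theorem. Indeed, totality of $\preceq$ is, for an arbitrary poset, \emph{equivalent} to excluding $\mathbf{2}+\mathbf{2}$ and $\mathbf{3}+\mathbf{1}$ (both directions are short arguments of the kind you gave), so what you have established is essentially a reformulation of the hypothesis, and all of the representation-building work remains. Concretely: for your constraint-graph route, one must prove that any cycle of constraints with nonpositive total weight (comparability edges contributing $-1$ strictly, incomparability edges $+1$) forces a forbidden subposet, which requires a real minimal-cycle analysis. For your inductive route, re-inserting a $\preceq$-maximal element $x_0$ into an \emph{arbitrary} unit representation of $P-x_0$ can fail: an element $y<x_0$ and an element $z\|x_0$ may be twins in $P-x_0$ and thus share a left endpoint, leaving no admissible value for $\ell_{x_0}$; and two elements of $I_P(x_0)$ that are comparable to each other may have been placed more than two units apart. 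So the induction needs a strengthened hypothesis (a representation compatible with the trace preorder of the larger poset $P$, or a canonical, e.g.\ integral, representation), and proving that strengthened statement is precisely the missing work. Note finally that finiteness of $P$ must enter this step --- the forbidden-subposet characterization fails for infinite posets, since an uncountable chain contains neither $\mathbf{2}+\mathbf{2}$ nor $\mathbf{3}+\mathbf{1}$ yet admits no unit-interval representation --- and your sketch never invokes it.
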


\subsection{Further Definitions}
\begin{definition}
    A semiorder which cannot be written as a lexicographic sum of semiorders over a nontrivial chain is called a \emph{brick}.
\end{definition}
\begin{observation}
    Let $P$ be a semiorder and $I$ a representation function of $P$. Let $H$ be the codomain of $I$.
    \[H = \{I(x)\colon\, x\in P\}\]
    \noindent We can make two observations:
    \begin{enumerate}
        \item If $P$ is a brick, $\bigcup_{I\in H}I$ is an interval on the real line. Specifically, $H = [\min\{\ell_x\colon\, x\in P\},\max\{r_x\colon\, x\in P\}]$.
        \item If $P$ is not a brick, there exists a point $u\in \mathbb{R}$ such that $u\notin I(x)$ for all $x\in P$ and there exist both $x_1$ and $x_2$ in $P$ such that $r_{x_1}<u$ and $\ell_{x_2}>u$.
    \end{enumerate}
\end{observation}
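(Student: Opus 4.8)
The plan is to treat the two parts as the two directions of a single dictionary between \emph{gaps} of a representation and \emph{lexicographic-sum decompositions over the $2$-chain}. Call a point $u\in\mathbb{R}$ a \emph{gap} of $I$ if $u\notin I(x)$ for every $x\in P$, while there exist $x_1,x_2\in P$ with $r_{x_1}<u<\ell_{x_2}$. The heart of the matter is the claim that $I$ has a gap if and only if $P$ splits as a lexicographic sum of two nonempty semiorders over the $2$-chain; granting this, part (2) is the forward implication and part (1) is the contrapositive of the backward implication. Throughout I will use that $P$ is finite, so that $\min_x\ell_x$ and $\max_x r_x$ are attained, and that every induced subposet of a semiorder is again a semiorder, since it inherits the absence of $\mathbf{2}+\mathbf{2}$ and $\mathbf{3}+\mathbf{1}$.

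For part (2), suppose $P$ is not a brick, so by definition it is the lexicographic sum of semiorders $\{P_q\}_{q\in Q}$ over a chain $Q$ with $|Q|\ge 2$ and nonempty blocks. Let $q_0=\min Q$, set $B=X_{q_0}$ and $A=\bigcup_{q>q_0}X_q$; both are nonempty, and by the definition of the lexicographic sum every element of $B$ lies below every element of $A$ in $P$. Reading this through the representation, where $b<a$ means $\ell_b>r_a$, I obtain $\ell_b>r_a$ for all $b\in B$ and $a\in A$, hence $\min_{b\in B}\ell_b>\max_{a\in A}r_a$. Any $u$ in the nonempty open interval $(\max_{a\in A}r_a,\ \min_{b\in B}\ell_b)$ is then a gap: each $A$-interval lies entirely to the left of $u$ and each $B$-interval entirely to its right, so choosing $x_1\in A$ and $x_2\in B$ supplies witnesses $r_{x_1}<u<\ell_{x_2}$, exactly as required.

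For part (1), the inclusion $\bigcup_{x\in P}I(x)\subseteq[\min_x\ell_x,\max_x r_x]$ is immediate. For the reverse I argue by contradiction. Suppose some $u\in[\min_x\ell_x,\max_x r_x]$ lies in no $I(x)$; since the endpoints are covered, $u$ is strictly interior, $\min_x\ell_x<u<\max_x r_x$. Partition $P$ into $L=\{x:r_x<u\}$ and $R=\{x:\ell_x>u\}$; as no interval contains $u$ these are disjoint and cover $P$, and the elements realizing $\min_x\ell_x$ and $\max_x r_x$ certify that $L$ and $R$ are nonempty. For $x\in L$ and $y\in R$ we have $r_x<u<\ell_y$, so $\ell_y>r_x$, i.e.\ $y<x$ in $P$; thus every element of $R$ sits below every element of $L$. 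Combined with the fact that the orders inside $L$ and inside $R$ are the induced ones, this exhibits $P$ as the lexicographic sum of the semiorders $R$ and $L$ over the $2$-chain, contradicting that $P$ is a brick. Hence no such $u$ exists and the union is all of $[\min_x\ell_x,\max_x r_x]$.

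I expect the only real obstacle to be bookkeeping rather than ideas. One must verify carefully that the partition furnished by a gap is a genuine lexicographic sum, that is, that all cross-comparabilities point the same way and that no comparability within a block is lost, and one must stay consistent with the paper's orientation convention $x<y\iff\ell_x>r_y$, which places lower poset-elements to the \emph{right} on the line and so flips the naive picture of where the bottom block sits. Confirming that each block is itself a semiorder, so that the decomposition is a lexicographic sum \emph{of semiorders} as the definition of brick demands, is the last routine point.
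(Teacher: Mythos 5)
Your proof is correct. Note first that the paper itself offers no proof of this statement --- it is presented as an Observation, asserted as immediate from the definitions --- so there is no argument of the authors' to compare against; what you have written is the proof they evidently had in mind, namely the dictionary between gaps of a representation and decompositions of $P$ as a lexicographic sum over the $2$-chain. Both directions of your dictionary check out: in part (2) the finiteness of $P$ gives $\min_{b\in B}\ell_b > \max_{a\in A} r_a$ and hence a nonempty open interval of gap points, and in part (1) the partition $L=\{x: r_x<u\}$, $R=\{x:\ell_x>u\}$ is indeed a disjoint cover with both parts nonempty, all cross-pairs comparable in one direction, and each part an induced semiorder by the Scott--Suppes characterization. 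Two of your deferred ``bookkeeping'' points deserve a word: the worry that some within-block or reversed cross-comparability could spoil the lexicographic-sum structure is disposed of instantly by antisymmetry (once $y<x$ holds for every $y\in R$, $x\in L$, no relation $x<y$ can also hold), and your reading of the brick definition as requiring nonempty blocks is the only reading under which the definition is non-vacuous, so it is the right one. You are also right to flag the orientation convention: the paper's displayed definition of interval order ($\ell_x>r_y \iff x<y$) is the reverse of the convention it actually uses from Section 2 onward ($[a,b]<[c,d]$ iff $b<c$); you followed the displayed definition consistently, and the argument is symmetric under reversing the line, so nothing breaks either way.
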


Throughout the paper, we will use the standard notation $f(n)\sim g(n)$ for positive functions $f,g$ to signify that $f(n)$ is asympotic to $g(n)$, i.e. 
    \[\lim_{n\to \infty}\frac{f(n)}{g(n)} = 1.\]

\section{Computing brick probabilities}

Let $L$ be a positive real number. 
Generate a random semiorder the following way. Let $X_1,\ldots,X_n$ be independent uniform random variables on $[0,L]$. Let the random poset $P$ have ground set $\{[X_i,X_i+1]: i=0,\ldots,n\}$, and the ordering is as usual for interval orders: $[a,b]<[c,d]$ if $b<c$. Our goal is to investigate the distribution of $P$.

It is immediately clear that we can't expect this distribution to be anywhere near uniform on the isomorphy classes of semiorders on $n$ points. Indeed, the probability of a chain converges to $1$ as $L\to\infty$, and therefore, the probability of all other semiorders will converge to $0$. The asymptotics of this convergence, however, is interesting and nontrivial.

Our results provide a tool to compute the probability of a given semiorder $P$. First we will show a few technical results to support the probability calculations for bricks. Using the brick probabilities, we are able to expand to lexicographic sums of bricks. Finally we will show that the probabilities of generating some special types of bricks follow a well-known sequence with many other applications. 

\begin{lemma}
Let $P$ be a semiorder. If $x,y$ are not twins, then either $\ell_x<\ell_y$ in every representation, or $\ell_x>\ell_y$ in every representation.
\end{lemma}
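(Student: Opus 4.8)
The plan is to work directly from the semiorder representation, exploiting the fact that with intervals $I(x)=[\ell_x,\ell_x+1]$ the poset relation reads $x<y$ if and only if $\ell_x+1<\ell_y$. The central fact I would establish first is a monotonicity observation: in \emph{any} fixed representation, if $\ell_x\le\ell_y$ then $D(x)\subseteq D(y)$ and $U(y)\subseteq U(x)$. This is a one-line check. If $z<x$ then $\ell_z+1<\ell_x\le\ell_y$, so $z<y$; and if $z>y$ then $\ell_y+1<\ell_z$, whence $\ell_x+1\le\ell_y+1<\ell_z$ and so $z>x$. The point worth stressing is that the downset and upset are intrinsic to the poset $P$ and do not depend on the representation, so these containments are genuine statements about $P$ even though they are read off from one representation.

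With this in hand I would rule out the two ways the conclusion could fail. First, suppose some representation had $\ell_x=\ell_y$. Then the monotonicity observation applies in both directions and forces $D(x)=D(y)$ and $U(x)=U(y)$; that is exactly the definition of $x$ and $y$ being twins, contradicting the hypothesis. Hence $\ell_x\ne\ell_y$ in every representation.

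Second, suppose the strict order were reversed between two representations, say $\ell_x<\ell_y$ in one and $\ell_x>\ell_y$ in another. Applying the monotonicity observation to the first gives $D(x)\subseteq D(y)$ and $U(y)\subseteq U(x)$, while applying it to the second gives the reverse containments. Together these again yield $D(x)=D(y)$ and $U(x)=U(y)$, so $x$ and $y$ are twins, a contradiction. Combining the two cases, the sign of $\ell_x-\ell_y$ is the same nonzero quantity in every representation, which is precisely the claim.

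I do not anticipate a serious obstacle: the entire argument rests on the single monotonicity observation, and the only point demanding care is the bookkeeping that transfers representation-level inequalities into representation-independent statements about the downsets and upsets of $P$. It is worth noting that the proof uses only the comparabilities of $P$ and never appeals to the incomparabilities, so neither Fishburn's theorem nor the Scott--Suppes forbidden-subposet characterization is needed.
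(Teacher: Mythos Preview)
Your proof is correct. The monotonicity observation is sound, and the two-case contradiction argument (equal endpoints, then switched strict order) cleanly forces $D(x)=D(y)$ and $U(x)=U(y)$.

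Your route is genuinely different from the paper's. The paper argues \emph{directly}: since $x,y$ are not twins, one of the downsets or upsets differs; it then invokes the theorem that the downsets (resp.\ upsets) of an interval order are totally ordered by inclusion, so say $D(x)\subsetneq D(y)$, and picks a witness $z\in D(y)\setminus D(x)$. The comparabilities $z<y$ and $z\not<x$ translate into $\ell_x\le \ell_z+1<\ell_y$ in every representation. Your argument is the contrapositive and is more self-contained: you never call on the total ordering of downsets, only the one-line monotonicity $\ell_x\le \ell_y\Rightarrow D(x)\subseteq D(y),\ U(y)\subseteq U(x)$, and you let the containments collide. What the paper's approach buys is an explicit witness $z$ certifying the strict inequality, and it foregrounds the downset-chain structure that is used again later; what yours buys is that nothing beyond the definition of a unit-interval representation is needed.
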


\begin{proof}
Let $P$ be a semiorder and $x,y\in P$ such that $x,y$ are not twins.
Since $x$ and $y$ are not twins, either $D(x)\neq D(y)$ or $U(x)\neq U(y)$.
Without loss of generality, assume $D(x)\neq D(y)$. 
Because $P$ is an interval order the downsets of $P$ are linearly ordered by inclusion. 
Thus, either $D(x)\subset D(y)$ or $D(y)\subset D(x)$. 
Without loss of generality, let $D(x)\subset D(y)$.

So, there exists a $z\in D(y)$ such that $z \notin D(x)$. 
Since $z\in D(y)$, it follows that $\ell_z+1< \ell_y$ in every representation of $P$.
Similarly, since $z\notin D(x)$, we know $z\in I(x)\cup U(x)$, so $\ell_z +1 \geq \ell_x$ in every representation of $P$.
Combining these, 
$$\ell_x\leq\ell_z+1<\ell_y$$
Therefore, $\ell_x < \ell_y$ in all representations of $P$.
\end{proof}

\begin{corollary}
Let $P$ be a semiorder. There is a linear extension $(x_1,\ldots,x_n)$ of the vertices, such that
\begin{itemize}
\item In each set of twins, all the elements are consecutive.
\item If $i<j$ and $x_i$, $x_j$ are not twins, then $\ell_{x_i}<\ell_{x_j}$ in every representation.
\end{itemize}
\end{corollary}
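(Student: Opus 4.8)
The plan is to build the desired ordering by combining two structures: the coarse linear order on the equivalence classes of twins, and an arbitrary ordering within each class. First I would observe that being twins is an equivalence relation on the vertices of $P$: reflexivity is trivial, symmetry is immediate from the symmetric form of the definition, and transitivity follows because equality of downsets and upsets is transitive. This partitions $P$ into twin classes $T_1,\ldots,T_k$.

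Next I would use the preceding lemma to define a linear order on these classes. If $T$ and $T'$ are distinct twin classes, pick representatives $x\in T$ and $y\in T'$; since they lie in different classes they are not twins, so by the lemma either $\ell_x<\ell_y$ in every representation or $\ell_x>\ell_y$ in every representation. The key point I would need to check is that this comparison does not depend on the choice of representatives: if $x,x'\in T$ and $y,y'\in T'$, then $x,x'$ are twins and $y,y'$ are twins, so $\ell_x=\ell_{x'}$ and $\ell_y=\ell_{y'}$ whenever the left endpoints of twins coincide in a representation. This forces the comparison of $T$ and $T'$ to be well defined and, being a relation induced by the total order of reals, transitive and total. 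Hence the classes are linearly ordered; label them so that $T_1\prec T_2\prec\cdots\prec T_k$ in this order.

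Now I would construct the sequence $(x_1,\ldots,x_n)$ by listing the elements of $T_1$ in any order, followed by the elements of $T_2$, and so on. By construction each twin class occupies a consecutive block, which gives the first bullet. For the second bullet, if $i<j$ with $x_i,x_j$ not twins, then $x_i$ and $x_j$ lie in distinct classes $T_a$ and $T_b$ with $a<b$, so $T_a\prec T_b$, and thus $\ell_{x_i}<\ell_{x_j}$ in every representation. Finally I would verify that this sequence is genuinely a linear extension of $P$: if $x_i<x_j$ in $P$ then $x_i,x_j$ cannot be twins (since twins are incomparable), so they lie in distinct classes, and the relation $x_i<x_j$ forces $\ell_{x_i}<\ell_{x_j}$ in every representation, which by the class ordering means $T_a\prec T_b$ and hence $i<j$.

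The main obstacle, and the step deserving the most care, is confirming that the comparison between twin classes is well defined, that is, independent of the chosen representatives. This rests on the claim that twins share the same left endpoint in every representation. I would justify this by noting that if $x$ and $y$ are twins then they are interchangeable: $D(x)=D(y)$ and $U(x)=U(y)$ mean that swapping their intervals yields another valid representation, and combined with the previous lemma (which pins down left-endpoint order for non-twins) one concludes their left endpoints must in fact coincide. Once this is secured, the remaining verifications are routine.
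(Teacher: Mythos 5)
Your overall architecture is sound and, modulo one step, matches what the paper leaves implicit: twin classes partition the ground set, the Lemma induces a comparison between distinct classes, and concatenating the classes gives the desired sequence. Your routine verifications are also correct: twins are incomparable (if $x<x'$ then $x\in D(x')=D(x)$, impossible), so any internal ordering of a class is harmless, and comparability $x_i<x_j$ forces $\ell_{x_i}+1<\ell_{x_j}$, hence $\ell_{x_i}<\ell_{x_j}$ in every representation, so the concatenation is genuinely a linear extension. The paper itself offers no proof beyond the Lemma and a construction recipe, so this level of detail is a reasonable way to fill it in.

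However, the step you single out as the crux contains a genuine error: the claim that \emph{twins share the same left endpoint in every representation} is false. Take $P$ to be the $2$-element antichain; its two elements are twins, yet $I(x)=[0,1]$, $I(y)=[1/2,3/2]$ is a perfectly valid unit-interval representation in which their left endpoints differ. Interchangeability of twins only tells you that swapping their intervals yields \emph{another} valid representation; it does not force the endpoints to coincide within a single representation, and no combination with the Lemma will, since the Lemma says nothing about twin pairs. As written, your well-definedness of the class comparison therefore rests on a false statement.

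The repair is short, and it uses exactly the swap you already invoke, pointed in the right direction. Suppose $x,x'$ are twins, $y$ lies in a different class, $\ell_x<\ell_y$ in every representation, and (for contradiction, using the Lemma's dichotomy) $\ell_y<\ell_{x'}$ in every representation. Take any representation $R$; in it $\ell_x<\ell_y<\ell_{x'}$. Exchanging the intervals of $x$ and $x'$ produces a valid representation $R'$ (because $D(x)=D(x')$ and $U(x)=U(x')$, every other element's relations are still correctly encoded, and the twins remain incomparable), and in $R'$ we have $\ell_y<\ell_x$, contradicting $\ell_x<\ell_y$ in every representation. Hence $\ell_{x'}<\ell_y$ in every representation, so the class comparison is independent of representatives. (Alternatively: one \emph{can} always build \emph{some} representation in which twins get identical intervals, and that single representation plus the Lemma's all-or-nothing conclusion gives the same independence.) With that substitution, the rest of your argument goes through.
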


Such linear extensions are called \emph{endpoint linear extensions}. (Note that the order of the left and the right endpoints are the same.) These endpoint linear extensions are easy to construct using the following process:
\begin{enumerate}
	\item Choose a representative from each set of twins and delete the other twins from the poset.
	\item Order the elements by increasing size of downset.
	\item If there were any ties in step 2, order the tied elements by decreasing size of upset. 
	\item Insert your previously deleted twins directly to the right (or left) of their respective representative.
\end{enumerate}

\subsection{Lower and upper bound functions}\label{lowerupper}

Let $P$ be an interval order, and $(x_1,\ldots,x_n)$ an endpoint linear extension. We will define two functions, $a$ and $b$, from $P$ to the set of symbolic expressions involving the variables $x_1,\ldots,x_n$. The functions $a$ and $b$ will provide lower and upper bounds, respectively, for the left endpoint of the interval representing $x_i$.

Fix $x_i$, and let $Q$ be the interval order induced by $x_1,\ldots,x_{i-1}$. First we define $a(x_i)$.

First, $a(x_1)=0$ and $b(x_1)=L$, regardless of $P$.

For $i>1$, if $D_Q(x_i)\neq\emptyset$ and $I_Q(x_i)\cap U_Q(x_j)=\emptyset$ where $x_j\in D_Q(x_i)$ such that $j$ is as large as possible, then define $a(x_i)=x_j+1$. Otherwise, define $a(x_i)=x_{i-1}$.

Next, if $I_Q(x_i)=\emptyset$, then define $b(x_i)=L$. If $I_Q(x_i)\neq\emptyset$, then let $x_j\in I_Q(x_i)$ with $j$ as small as possible. Define $b(x_i)=x_j+1$.

\begin{example}
	Consider the semiorder $P$ in Figure~\ref{fig:lowerupper}. We will determine the lower and upper bound functions for each element of $P$.
\end{example}

	\begin{figure}
		\centering
		\begin{tikzpicture}
			\node[circle, fill=black,scale = 0.5,label = below:$x_1$] (A) at (0,0) {};
			\node[circle, fill=black,scale = 0.5,label = below:$x_2$] (B) at (1,0) {};
			\node[circle, fill=black,scale = 0.5,label = left:$x_3$] (C) at (0,1) {};
			\node[circle, fill=black,scale = 0.5,label = right:$x_4$] (D) at (1,1) {};
			\node[circle, fill=black,scale = 0.5,label = above:$x_5$] (E) at (0,2) {};
			\node[circle, fill=black,scale = 0.5,label =above:$x_6$] (F) at (1,2) {};

			\path [-] (A) edge node {} (C);
			\path [-] (A) edge node {} (D);
			\path [-] (B) edge node {} (D);
			\path [-] (D) edge node {} (E);
			\path [-] (D) edge node {} (F);
			\path [-] (C) edge node {} (E);
			\path [-] (C) edge node {} (F);
		\end{tikzpicture}
		\caption{Lower and upper bound functions}\label{fig:lowerupper}
	\end{figure}
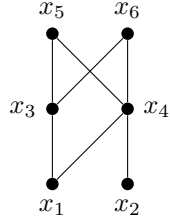

The sequence $(x_1,x_2,x_3,x_4,x_5,x_6)$ as an endpoint linear extension of $P$. Note that the sequence $(x_1,x_2,x_3,x_4,x_6,x_5)$ is also an endpoint linear extension since $x_5$ and $x_6$ are twins. Now we determine the lower bounds for each element:
\begin{itemize}
	\item $x_1$: As stated above, $a(x_1)=0$.
	\item $x_2$: Since $D_Q(x_2) = \emptyset$, we know $a(x_2)=x_1$.
	\item $x_3$: Since $D_Q(x_3) = \{x_1\}$ and $I_Q(x_3)\cap U_Q(x_1)=\emptyset$, we know $a(x_3)=x_1+1$.
	\item $x_4$: Since $D_Q(x_4) = \{x_1,x_2\}$ and $I_Q(x_4)\cap U_Q(x_2)=\emptyset$, we know $a(x_4)=x_2+1$.
	\item $x_5$: Since $D_Q(x_5) = \{x_1,x_2,x_3,x_4\}$ and $I_Q(x_5)\cap U_Q(x_4)=\emptyset$, we know $a(x_5)=x_4+1$.
	\item $x_6$: Since $D_Q(x_6) = \{x_1,x_2,x_3,x_4\}$ and $I_Q(x_6)\cap U_Q(x_4)=\{x_5\}$, we know $a(x_6)=x_5$.
\end{itemize}
Then we determine the upper bounds for each element:
\begin{itemize}
	\item $x_1$: As stated above, $b(x_1)=L$.
	\item $x_2$: Since $I_Q(x_2) = \{x_1\}$, we know $b(x_2)=x_1+1$.
	\item $x_3$: Since $I_Q(x_3) = \{x_2\}$, we know $b(x_3)=x_2+1$.
	\item $x_4$: Since $I_Q(x_4) = \{x_3\}$, we know $b(x_4)=x_3+1$.
	\item $x_5$: Since $I_Q(x_5) = \emptyset$, we know $b(x_5)=L$.
	\item $x_6$: Since $I_Q(x_6) = \{x_5\}$, we know $b(x_6)=x_5+1$.
\end{itemize}

If a set of real numbers chosen from the interval $[0,L]$ satisfies these bouding functions with respect to $P$, then the unit intervals with these left end points generate the semiorder $P$. Note that the converse of this statement is true as well. 

\begin{lemma}\label{thm:boundfuncs}
    Let $P$ be a semiorder and $a$ and $b$ be the bounding functions defined by this semiorder. If the each of the elements $x_i$ in $x_1,\ldots,x_n$ satisfies $a(x_i)<x_i<b(x_i)$, then the intervals $[x_i,x_i+1]$ generate the semiorder $P$.
\end{lemma}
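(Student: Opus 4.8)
The plan is to prove the statement by induction on $n$, adding the points one at a time in the order of the endpoint linear extension $(x_1,\ldots,x_n)$. Writing $Q_i$ for the poset induced by $x_1,\ldots,x_i$, the induction hypothesis will be that the unit intervals $[x_1,x_1+1],\ldots,[x_i,x_i+1]$ already generate $Q_i$, together with two auxiliary invariants that make the bookkeeping possible: first, that the left endpoints are strictly increasing, $x_1<\cdots<x_i$; and second, that among the elements preceding $x_i$ the members of $D_Q(x_i)$ all carry smaller indices than the members of $I_Q(x_i)$, so that the predecessors split cleanly as a \emph{downset prefix} $x_1,\ldots,x_k$ followed by an \emph{incomparable suffix} $x_{k+1},\ldots,x_{i-1}$. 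Since $(x_1,\ldots,x_n)$ is a linear extension, for every pair $i<j$ only two cases occur in $P$ (namely $x_i<_P x_j$ or $x_i\parallel_P x_j$), so it suffices to verify at each step that, for the newly added $x_j$ and every $i<j$, we have $x_i+1<x_j$ exactly when $x_i<_P x_j$ and the intervals overlap exactly when $x_i\parallel_P x_j$.

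I would first dispatch the two invariants. Monotonicity is immediate in the branch $a(x_j)=x_{j-1}$, since then $x_j>x_{j-1}$; in the branch $a(x_j)=x_k+1$ I would use that $x_{j-1}$ either equals $x_k$ or is incomparable to $x_k$ in $Q$ and hence, by the induction hypothesis, overlaps it, giving $x_{j-1}<x_k+1<x_j$. The prefix/suffix split is where the semiorder hypothesis enters: because every interval has unit length, the fixed order of left endpoints guaranteed by the preceding Lemma forbids an element incomparable to $x_j$ from preceding an element below $x_j$; a short computation with $\ell_{x_{i'}}+1<\ell_{x_j}$ (for a below-element) and $\ell_{x_j}<\ell_{x_i}+1$ (for an incomparable element) contradicts $\ell_{x_i}\le\ell_{x_{i'}}$ when $i<i'$. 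With the split in hand, the largest-index downset element is exactly $x_k$ and the smallest-index incomparable element is exactly $x_{k+1}$, which are precisely the elements referenced by $a$ and $b$.

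The main verification then becomes short. On the incomparable side, $b(x_j)=x_{k+1}+1$ gives $x_j<x_{k+1}+1$, so $x_j$ overlaps $x_{k+1}$; for any incomparable $x_i$ with $i>k+1$, monotonicity yields $x_i<x_j<x_{k+1}+1<x_i+1$, an overlap, and $x_j$ is never pushed below such an $x_i$. On the downset side, in the branch $a(x_j)=x_k+1$ we get $x_j>x_k+1\ge x_i+1$ for every $i\le k$ by monotonicity, so $x_j$ lies above its entire downset, and the structure of the incomparable suffix guarantees nothing below is accidentally made incomparable.

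The step I expect to be the real obstacle is the remaining branch $a(x_j)=x_{j-1}$ with $D_Q(x_j)\neq\emptyset$, which is triggered precisely when $I_Q(x_j)\cap U_Q(x_k)\neq\emptyset$; here the lower bound only asserts $x_j>x_{j-1}$, yet I must still force $x_j$ above all of $D_Q(x_j)$. The idea is to promote the witness: some $x_i$ with $k<i\le j-1$ satisfies $x_k<_P x_i$, so $\ell_{x_k}+1<\ell_{x_i}\le\ell_{x_{j-1}}$ in the true representation, whence $x_k<_P x_{j-1}$ in $P$. By the induction hypothesis this gives $x_k+1<x_{j-1}$ in the generated configuration, and therefore $x_j>x_{j-1}>x_k+1\ge x_i+1$ for every downset element, closing the case. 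Verifying that this promotion is always available, and that it meshes with the prefix/suffix structure, is the delicate part of the argument; everything else reduces to the monotonicity bookkeeping above.
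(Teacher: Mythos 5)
Your proposal is correct and follows essentially the same route as the paper's proof: induction along the endpoint linear extension, with the same case split on the two branches of $a$, the same witness argument in the hard case $a(x_j)=x_{j-1}$ with $D_Q(x_j)\neq\emptyset$, and the same overlap check against the smallest-index incomparable for $b$. The only cosmetic differences are that you state the monotonicity and prefix/suffix invariants explicitly (the paper uses them implicitly via ``the endpoint linear extension ordering''), and your ``promotion'' step detours through the true representation of $P$ to conclude $x_k<_P x_{j-1}$ where the paper reads $x_k+1\leq x_w\leq x_{j-1}$ directly off the already-generated intervals.
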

\begin{proof}
    Let $P$ be a semiorder and $L = (x_1,\ldots,x_n)$ an endpoint linear extension of $P$. Assume the left endpoints of an interval representation satisfy the upper and lower bound conditions prescribed by the functions $a$ and $b$. We want to show this interval representation is a representation of $P$. We will do this by induction. Note that if the left endpoint corresponding to $\{x_1\}$ satisfies the upper and lower bound conditions prescribed by the functions $a$ and $b$ then the interval representation is a representation of the subposet of $P$ induced by $\{x_1\}$ since it is just the singleton; in addition, the left endpoint is ordered with respect to $L$. 

Now we fix $x_i$. Assume the left endpoints of $\{x_1,\ldots,x_{i-1}\}$ are ordered with respect to $L$ and form an interval representation of the subposet of $P$ induced by $\{x_1,\ldots,x_{i-1}\}$. Define this subposet to be $Q_i$. We need to show that adding the left endpoint of $x_i$ preserves this order, the interval corresponding to $x_i$ is above every interval corresponding to an element in its downset, and the interval corresponding to $x_i$ is intersecting every interval corresponding to an element in $I_{Q_i}(x_i)$.

First, we show that the addition of $x_i$ preserves the order of $L$ by showing $a(x_i)\geq x_{i-1}$. We have two cases:
\begin{itemize}
	\item $a(x_i)=x_{i-1}$: In which case it is trivially true that $a(x_i)\geq x_{i-1}$
	\item $a(x_i) = x_j+1$ where $x_j \in D_{Q_i}(x_i)$ and $j$ is as large as possible: In this case we also know that $I_{Q_i}(x_i)\cap U_{Q_i}(x_j)=\emptyset$. Therefore, for any $k=j+1,\ldots, i-1$ we know $x_k$ is incomparable to both $x_i$ and $x_j$. Since $Q_i$ is represented by the interval representation formed by the endpoints of $\{x_1,\ldots,x_{i-1}\}$ we know that the intervals corresponding to $x_k$ and $x_j$ intersect for any $k=j+1,\ldots, i-1$. Therefore $x_{i-1}\leq x_j +1$. So we have shown $a(x_i)\geq x_{i-1}$.
\end{itemize}

Next, knowing that the order according to $L$ is preserved, we show that for all $x_j\in D_{Q_i}(x_i)$ we have $x_j+1\leq a(x_i)$. Note that since we already have an interval representation on $\{x_1,\ldots,x_{i-1}\}$, we only need to show this for the greatest possible $j$. Again we have two cases:
\begin{itemize}
	\item $a(x_i)=x_{i-1}$: In this case it must be true that $I_{Q_i}(x_i)\cap U_{Q_i}(x_j)\neq \emptyset$. Therefore, there exists an $x_k$ such that $x_k$ is incomparable to $x_i$ and greater than $x_j$. So, by the endpoint linear extension ordering, 
	$$x_j+1\leq x_k\leq x_{i-1}$$
	\item $a(x_i) = x_j+1$ where $x_j \in D_{Q_i}(x_i)$ and $j$ is as large as possible: This case is trivial.
\end{itemize} 

Finally, we show that for all $x_j\in I_{Q_i}(x_i)$, we have $b(x_i)\leq x_j+1$. If $x_i$ has no incomparables, then this is trivial. Otherwise, by definition we set $b(x_i)=x_k+1$ where $x_k\in I_{Q_i}(x_i)$ and $k$ is as small as possible. Then for any $x_j\in I_{Q_i}(x_i)$, by the endpoint linear ordering we have 
$$b(x_i)=x_k+1\leq x_j+1$$
Therefore we have shown that if $\{x_1,\ldots,x_n\}$ satisfy the bound functions $a$ and $b$, the set of intervals $\{[x_i,x_i+1]\}_{i=1}^n$ induces the semiorder $P$.
\end{proof}

\subsection{The computation theorem}\label{comp}
Using the functions we defined in the previous subsection, we can provide a calculation of the probability that a given semiorder occurs. In order to achieve this, we will first define the following integral.
\[F(P,L)=\frac{1}{L^n}
\int_{a(x_1)}^{b(x_1)}
	\int_{a(x_2)}^{b(x_2)}
		\cdots
		\int_{a(x_n)}^{b(x_n)}
			n!
		\,dx_n
		\ldots
	\,dx_2
\,dx_1\]

This integral will function as an asymptotic approximation for the probability of generating $P$ on the interval $[0,L]$. Note that even for the following small examples, $F(P,L)$ is not equal to the probability of generating $P$ on $[0,L]$. 

Let $P$ be the $2$-element chain and $P'$ be the $2$-element antichain. Then $F(P,L)$ and $F(P',L)$ are defined as follows.
\[F(P,L)=\frac{2}{L^n}
    \int_{0}^{L}
        \int_{x_1+1}^{L}
            1
        \,dx_2
    \,dx_1=\frac{L^2-2L}{L^2}\]
    
\[F(P',L)=\frac{2}{L^n}
    \int_{0}^{L}
        \int_{x_1}^{x_1+1}
            1
        \,dx_2
    \,dx_1=\frac{2L}{L^2}\]
However, we can calculate the true probability of chosing $x_1,x_2$ uniformly from $[0,L]$ using the following figure.
\begin{center}
\begin{tikzpicture}[scale=.6]

  \draw[->,thick] (-1,0) -- (5,0) node[right] {$x_1$};
  \draw[->,thick] (0,0) -- (0,5) node[above] {$x_2$};

  \draw[shift={(.5,0)}, color=black] (0pt,2pt) -- (0pt,-2pt) node[below] {$1$};
  \draw[shift={(0,.5)}, color=black] (2pt,0pt) -- (-2pt,0pt) node[left] {$1$};

  \draw[shift={(4,0)}, color=black] (0pt,2pt) -- (0pt,-2pt) node[below] {$L$};
  \draw[shift={(0,4)}, color=black] (2pt,0pt) -- (-2pt,0pt) node[left] {$L$};

  \draw[-] (4,0) -- (4,4);
  \draw[-] (0,4) -- (4,4);

  \draw[dotted]  (.5,0) -- (4,3.5);
  \draw[dotted]  (0,.5) -- (3.5,4);

  \filldraw [black] (4,3.5) circle (1pt);
  \node[scale=.7] at (5,3.5) {$(L,L-1)$};
  \filldraw [black] (3.5,4) circle (1pt);
  \node[scale=.7] at (3.5,4.3) {$(L-1,L)$};
\end{tikzpicture}

\end{center}

The area inside the square is our sample space for this example. If the sample of $(x_1,x_2)$ lands within or on the dotted lines, they generate an antichain. Otherwise, if $(x_1,x_2)$ is outside the dotted lines, they generate a chain. Since we are choosing uniformly, we can calculate the probability of each of these events by calculating their area and taking it as a proportion of the square's area which is $L^2$. The area outside the dotted lines is easily calculated by doubling the area of one of the triangles. 
\[\Pr(P \text{ generated on } [0,L])=\frac{2\left(\frac{1}{2}(L-1)(L-1)\right)}{L^2}=\frac{L^2-2L+1}{L^2}\]
\[\Pr(P' \text{ generated on } [0,L])=1-\Pr(P \text{ generated on } [0,L])=\frac{2L-1}{L^2}\]
Notice that these are not equal to $F(P,L)$ and $F(P',L)$. This is due to $F(P',L)$ actually calculating the following area outlined in dotted lines.
\begin{center}
\begin{tikzpicture}[scale=.6]

  \draw[->,thick] (-1,0) -- (5,0) node[right] {$x_1$};
  \draw[->,thick] (0,0) -- (0,5) node[above] {$x_2$};

  \draw[shift={(.5,0)}, color=black] (0pt,2pt) -- (0pt,-2pt) node[below] {$1$};
  \draw[shift={(0,.5)}, color=black] (2pt,0pt) -- (-2pt,0pt) node[left] {$1$};

  \draw[shift={(4,0)}, color=black] (0pt,2pt) -- (0pt,-2pt) node[below] {$L$};
  \draw[shift={(0,4)}, color=black] (2pt,0pt) -- (-2pt,0pt) node[left] {$L$};

  \draw[-] (4,0) -- (4,4);
  \draw[-] (0,4) -- (4,4);

  \draw[dotted]  (.5,0) -- (4,3.5);
  \draw[dotted] (4,4) -- (4,4.5);
  \draw[dotted]  (0,.5) -- (3.5,4);
  \draw[dotted]  (3.5,4)-- (4,4.5);
  \draw[dotted]  (4,4)-- (4.5,4);
  \draw[dotted]  (4.5,4)-- (4,3.5);
\end{tikzpicture}

\end{center}

This area is equal to that of a parallelogram with base $2$ and height $L$ which gives us the ratio generated by $F(P',L)=\frac{2L}{L^2}$. A similar error occurs in the $F(P,L)$ calculation, in which some area is subtracted instead of added. The goal of these examples is to show that even at the trivial level, $F(P,L)$ is asymptotic to the probability, but not equal. These error terms become more complicated as the semiorder grows, creating a rather technically challenging problem. We also offer an easier way of calculating the probability of generating bricks and then general semiorders later in the paper.

\begin{lemma}\label{thm:bricksarelinear}
	If $P$ is a brick, then $L^n F(P,L)=CL$ where $C \neq 0$. 
\end{lemma}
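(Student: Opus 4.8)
The plan is to exploit a structural feature of bricks: among all the bounding functions, the value $L$ occurs only once, namely as $b(x_1)=L$. Granting this, one integrates out $x_2,\dots,x_n$ first and observes that the result is independent of $x_1$ (all the remaining bounds are translation-equivariant), so that the outermost integral $\int_0^L(\,\cdot\,)\,dx_1$ merely contributes the factor $L$. This already yields $L^nF(P,L)=CL$ for a constant $C$ independent of $L$, and the remaining work is to check $C\neq 0$.

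First I would prove that $b(x_i)\neq L$ for every $i\ge 2$. By definition $b(x_i)=L$ exactly when $I_Q(x_i)=\emptyset$, i.e.\ when $x_1,\dots,x_{i-1}$ are all comparable to $x_i$; since these precede $x_i$ in an endpoint linear extension, comparability forces $x_j<x_i$ in $P$ for every $j<i$. In any representation the intervals of $x_1,\dots,x_{i-1}$ then have right endpoints at most $x_{i-1}+1$, and $x_{i-1}<x_i$ in $P$ gives $x_{i-1}+1<x_i$, while every interval of $x_i,\dots,x_n$ has left endpoint at least $x_i$. Hence the open interval $(x_{i-1}+1,x_i)$ meets no interval of the representation, so $\bigcup_x I(x)$ is disconnected, contradicting part~(1) of the Observation. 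Thus for $i\ge2$ we have $I_Q(x_i)\neq\emptyset$ and $b(x_i)=x_k+1$ for some $k<i$; together with $a(x_i)\in\{x_j+1,x_{i-1}\}$, none of the bounds for $i\ge2$ mentions $L$.

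Next I would make the factorization precise. Writing $x_m=x_1+v_m$ for $m\ge2$ (and $v_1=0$), each bound of the form $x_p+c$ becomes $x_1+(v_p+c)$, so this substitution carries the inner iterated integral to one in the variables $v_2,\dots,v_n$ whose limits are free of $x_1$. Consequently the quantity $H(x_1):=\int_{a(x_2)}^{b(x_2)}\cdots\int_{a(x_n)}^{b(x_n)}n!\,dx_n\cdots dx_2$ is a constant $H_0$, and therefore $L^nF(P,L)=\int_0^L H_0\,dx_1=H_0L$, giving $C=H_0$.

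The main obstacle is showing $C=H_0\neq0$. I would argue that the iterated integral computing $H_0$ is in fact the (positive) volume of the nonempty bounded region $U$ of admissible relative positions. Boundedness is immediate from the brick property: since $\bigcup_x I(x)$ is a single interval of length $(x_n+1)-x_1\le n$, every $v_m=x_m-x_1$ satisfies $v_m\le n-1$, so $U\subseteq[0,n-1]^{n-1}$. Nonemptiness and openness follow because $P$, being a semiorder, admits a strict representation, which after translation lies in $U$; by Lemma~\ref{thm:boundfuncs} and its converse, $U$ is exactly the set of such representations. The delicate point is that the iterated integral equals the genuine volume of $U$ rather than a signed quantity, i.e.\ that $a(x_i)<b(x_i)$ throughout the relevant domain. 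I expect to verify this from the facts that, among $x_1,\dots,x_{i-1}$, the elements below $x_i$ form a prefix and those incomparable to $x_i$ form a suffix $\{x_k,\dots,x_{i-1}\}$, and that any two elements incomparable to $x_i$ and preceding it are themselves incomparable (so their intervals overlap, forcing $x_{i-1}<x_k+1=b(x_i)$). Once this consistency is in hand, $H_0=n!\operatorname{vol}(U)>0$ and the proof is complete.
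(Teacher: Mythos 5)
Your proof is correct, but it reaches the conclusion by a genuinely different mechanism than the paper's. The paper never uses translation invariance: it first bounds $L^nF(P,L)$ above by $n!L$ by comparing with the integral whose $i$-th bounds are $x_{i-1}$ and $x_{i-1}+1$ (using the brick fact $x_{i-1}\le a(x_i)<b(x_i)\le x_{i-1}+1$ for $i\ge 2$), then notes that the inner $(n-1)$-fold integral is a polynomial in $x_1$ alone, so that $\int_0^L(\cdot)\,dx_1$ is a polynomial in $L$ with zero constant term; the linear upper bound caps the degree at one, and positivity of the integrand with increasing bounds gives $C\neq 0$. You instead show the inner integral is \emph{constant} in $x_1$ via the substitution $x_m=x_1+v_m$ and the translation-equivariance of all inner bounds, which yields $L^nF(P,L)=H_0L$ outright, with no comparison bound and no degree argument, and it identifies the constant geometrically as $C=H_0=n!\operatorname{vol}(U)$. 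Your route also proves, rather than asserts, the two structural facts the paper takes ``by the definitions of the bound functions'': that $b(x_i)\neq L$ for $i\ge 2$ (your disconnection argument via the Observation) and that $a(x_i)<b(x_i)$ throughout the integration domain (your prefix/suffix analysis, which is correct: two predecessors of $x_i$ both incomparable to $x_i$ are themselves incomparable in a semiorder). What the paper's route buys is brevity, given that it is willing to assert those facts. One small patch your disconnection argument needs: if $x_m$ with $m>i$ is a twin of $x_i$, its left endpoint need not be at least $x_i$ in a given representation; but twins of $x_i$ share its downset, hence also lie above all of $x_1,\dots,x_{i-1}$, so the gap in $\bigcup_x I(x)$ persists just above $\max_{j<i}(x_j+1)$ and the contradiction with part (1) of the Observation stands.
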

\begin{proof}
	Let $P$ be a brick of order $n$. The following two statements are true by the definitions of the bound functions:
	\begin{itemize}
	\item $a(x_1)=0$ and $b(x_1)=L$
	\item if $i=2,\ldots,n$ then $x_{i-1}\leq a(x_i) < b(x_i) \leq x_{i-1}+1$
	\end{itemize}

	Since the function of integration in $F(P,L)$ is positive and constant, and the bounds of the $i^{th}$ integral define a subinterval of $[x_{i-1},x_{i-1}+1]$, we know
	\begin{multline*}
		F(P,L)\leq \frac{1}{L^n}
	\int_{0}^{L}
		\int_{x_1}^{x_1+1}
			\cdots
			\int_{x_{n-1}}^{x_{n-1}+1}
				n!
			\,dx_n
			\ldots
		\,dx_2
	\,dx_1\\=\frac{n!}{L^n}
	\int_{0}^{L}
		\int_{x_1}^{x_1+1}
			\cdots
			\int_{x_{n-2}}^{x_{n-2}+1}
				(x_{n-1}+1-x_{n-1})
			\,dx_{n-1}
			\ldots
		\,dx_2
	\,dx_1\\=\frac{n!}{L^n}
	\int_{0}^{L}
		1
	\,dx_1=\frac{n!L}{L^n}
	\end{multline*}
	
	Therefore, $L^n F(P,L)$ is at most linear in terms of $L$. It is left to show that $L^n F(P,L)$ does not have a nonzero constant term.

	We know that $\int_{a(x_2)}^{b(x_2)}
	\cdots
	\int_{a(x_n)}^{b(x_n)}
		n!
	\,dx_n
	\ldots
\,dx_2$ is a polynomial function of $x_1$ due to the constant integrand and the bounding functions. So, when we integrate this function over $[0,L]$, we cannot get a nonzero constant term. 

Finally, note that $L^n F(P,L)\neq 0$ because the integrand is positive and the bounds of each integral are in increasing order. 

So, we have shown $L^n F(P,L)=CL$ where $C\neq 0$.

\end{proof}
The following definition of \textit{order statistics} and the formula for their joint density function will be necessary in the proof of Theorem~\ref{thm:brickprob}.
\begin{definition}
	Let $X_1,\ldots,X_n$ be $n$ independent and identically distributed continuous random variables having a common density $f$ and distribution function $F$. Define $X_{(i)}$ with $i=1,\ldots,n$ to be the $i$ th smallest of $X_1,\ldots,X_n$. The ordered values $X_{(1)}\leq\ldots\leq X_{(n)}$ are known as the \textup{order statistics} corresponding to the random variables $X_1,\ldots,X_n$.
\end{definition}

The following formula for the joint density function of the order statistics is well-documented in the literature and will also be necessary for the proof of Theorem~\ref{thm:brickprob}. A proof is available in \cite{sheldonross}. 

\begin{theorem}\label{thm:orderstats}
	Let $X_1,\ldots,X_n$ be $n$ independent and identically distributed continuous random variables having a common density $f$. The joint density function of the order statistics $X_{(1)},\ldots,X_{(n)}$ is 
	$$f_{X_{(1)},\ldots,X_{(n)}}(x_1,\ldots,x_n)=n!\,f(x_1)\cdots f(x_n)$$
\end{theorem}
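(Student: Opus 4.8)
The plan is to derive the joint density directly from the definition of order statistics together with the independence of the $X_i$, rather than invoking the cited reference. The starting point is that, by independence, the joint density of the \emph{unsorted} tuple $(X_1,\ldots,X_n)$ is simply the product $f(y_1)\cdots f(y_n)$. Sorting is then a deterministic map $g\colon\mathbb{R}^n\to\mathbb{R}^n$ that sends a tuple to its increasing rearrangement, and the order statistics are exactly $g(X_1,\ldots,X_n)$. The idea is to push the product density forward through $g$.

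The first step is to observe that, because the $X_i$ are continuous, the event that two of them coincide has probability zero: for $i\neq j$, $\Pr(X_i=X_j)=0$. Hence we may restrict attention to the open set $D=\{y\in\mathbb{R}^n : y_1,\ldots,y_n \text{ are distinct}\}$, which carries full probability. On $D$ the sorting map is not injective; instead $D$ splits into the $n!$ open ``chambers'' indexed by the permutations $\pi\in S_n$, where the chamber for $\pi$ consists of those $y$ with $y_{\pi(1)}<\cdots<y_{\pi(n)}$. On each chamber $g$ acts as the coordinate permutation $y\mapsto(y_{\pi(1)},\ldots,y_{\pi(n)})$, a linear bijection onto the ordered cone $\{x_1<\cdots<x_n\}$.

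The second step is to apply the change-of-variables formula for a piecewise-injective smooth map: the density of $g(X_1,\ldots,X_n)$ at a point $x$ with $x_1<\cdots<x_n$ is the sum, over the $n!$ preimages, of the source density divided by the absolute Jacobian determinant of $g$ on the relevant chamber. Each coordinate permutation has Jacobian determinant $\pm1$, so its absolute value is $1$ and contributes no scaling. Thus
\[
f_{X_{(1)},\ldots,X_{(n)}}(x_1,\ldots,x_n)=\sum_{\pi\in S_n} f(x_{\pi(1)})\cdots f(x_{\pi(n)}).
\]
The final step exploits symmetry: the product $f(x_{\pi(1)})\cdots f(x_{\pi(n)})$ does not depend on $\pi$, so every one of the $n!$ summands equals $f(x_1)\cdots f(x_n)$, yielding $n!\,f(x_1)\cdots f(x_n)$ on the region $x_1<\cdots<x_n$ (and $0$ otherwise).

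I expect the main obstacle to lie in the bookkeeping of the transformation theorem for a non-injective map rather than in any single estimate: one must be careful to discard the measure-zero tie set and to verify that each chamber maps bijectively onto the ordered cone with unit Jacobian. A fully elementary alternative, avoiding the transformation theorem, is to argue infinitesimally: the probability that the order statistics lie in $\prod_i [x_i,x_i+dx_i]$ equals the number of assignments of the original variables to these slots, namely $n!$, times $f(x_1)\cdots f(x_n)\,dx_1\cdots dx_n$, which reaches the same formula and sidesteps the Jacobian computation entirely.
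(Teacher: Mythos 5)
Your proof is correct, but note that the paper itself does not prove this statement at all: it is presented as standard background, with the proof deferred to the cited textbook \cite{sheldonross}. So there is no ``paper proof'' to match; what you have written is a self-contained argument where the authors chose a citation. Your route is the rigorous change-of-variables version of the standard argument: you correctly discard the tie set $\{y_i = y_j \text{ for some } i \neq j\}$ (which has probability zero because the $X_i$ are independent with a density, so the diagonal in each coordinate pair has measure zero under the product density), decompose the remaining set into the $n!$ open chambers on which the sorting map is a coordinate permutation, observe that each chamber maps bijectively onto the cone $\{x_1 < \cdots < x_n\}$ with Jacobian determinant of absolute value $1$, and sum over the fibers. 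The symmetry collapse $\sum_{\pi \in S_n} f(x_{\pi(1)})\cdots f(x_{\pi(n)}) = n!\, f(x_1)\cdots f(x_n)$ is immediate, and you correctly note the density vanishes off the ordered cone. The ``fully elementary alternative'' you sketch at the end --- counting the $n!$ assignments of the unordered sample to infinitesimal boxes $\prod_i [x_i, x_i + dx_i]$ --- is in fact essentially the proof given in the reference the paper cites; made rigorous it amounts to computing $\Pr\bigl(X_{(i)} \in [x_i, x_i + \varepsilon] \text{ for all } i\bigr)$ as a sum over permutations and letting $\varepsilon \to 0$, which is just an integrated form of your chamber decomposition. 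Either version is a complete and correct proof; the chamber/Jacobian formulation is the cleaner one to state without hand-waving about infinitesimals.
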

Using order statistics, we will begin by showing the probability calculation for bricks. 
\begin{theorem}\label{thm:brickprob}
    If $P$ is a brick then $\Pr(P \text{ generated on } [0,L])=F(P,L)+\frac{C}{L^n}$ where $C\in \mathbb{R}$.
\end{theorem}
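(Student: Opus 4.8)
The plan is to write both $\Pr(P\text{ generated on }[0,L])$ and $F(P,L)$ as integrals of the \emph{same} constant order-statistic density $n!/L^n$, differing only in their region of integration, and then to show that these two regions differ only by a boundary sliver near the right endpoint $L$ whose volume does not depend on $L$.

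First I would record the exact probability. Since $X_1,\ldots,X_n$ are i.i.d.\ uniform on $[0,L]$, Theorem~\ref{thm:orderstats} gives that the order statistics $X_{(1)}\le\cdots\le X_{(n)}$ have joint density $n!/L^n$ on the ordered simplex inside $[0,L]^n$. By Lemma~\ref{thm:boundfuncs} together with its converse, a sorted configuration generates $P$ precisely when $a(x_i)\le x_i\le b(x_i)$ for every $i$; here the inequalities $a(x_i)\ge x_{i-1}$ force the ordering automatically, and since $x_n$ is largest the single constraint $x_n\le L$ forces every $x_i\in[0,L]$. Hence $\Pr(P\text{ generated on }[0,L])=\frac{n!}{L^n}\operatorname{Vol}(R_{\Pr})$, where $R_{\Pr}$ is the set of $(x_1,\ldots,x_n)$ satisfying all the bounds together with $x_n\le L$. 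On the other hand $F(P,L)=\frac{n!}{L^n}\operatorname{Vol}(R_F)$, where $R_F$ imposes the same bounds on the inner variables but \emph{drops} the clipping $x_n\le L$; this uses that for a brick the bounds never cross, as recalled in the proof of Lemma~\ref{thm:bricksarelinear}, so the iterated integral is a genuine volume. Thus $R_{\Pr}\subseteq R_F$ and $R_F\setminus R_{\Pr}=R_F\cap\{x_n>L\}$.

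Second, I would exploit the brick structure. As recalled in Lemma~\ref{thm:bricksarelinear}, for a brick one has $a(x_1)=0$, $b(x_1)=L$, and for $i\ge2$
\[x_{i-1}\le a(x_i)<b(x_i)\le x_{i-1}+1,\]
where every bound for $i\ge2$ is one of $x_j$ or $x_j+1$ and is therefore invariant under the simultaneous translation $x_i\mapsto x_i+t$. Consequently the inner integral
\[V_0=\int_{a(x_2)}^{b(x_2)}\cdots\int_{a(x_n)}^{b(x_n)}dx_n\cdots dx_2\]
is independent of $x_1$, the only $L$-dependent bound being $b(x_1)=L$. Since $x_n\le x_1+(n-1)$, the set $R_F\setminus R_{\Pr}$ is confined to the slab $x_1\in(L-(n-1),L]$, and in particular $R_F$ and $R_{\Pr}$ agree whenever $x_1\le L-(n-1)$.

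Finally I would evaluate the discrepancy. Substituting $u_i=x_i-x_1$ (Jacobian $1$), the translation invariance of the bounds for $i\ge2$ shows that the $(n-1)$-dimensional volume of the slice of $R_F\setminus R_{\Pr}$ at a fixed $x_1$ depends on $x_1$ only through $s=L-x_1$; call it $\phi(s)$, a function supported on $[0,n-1]$ and independent of $L$. Then for $L\ge n-1$,
\[\operatorname{Vol}(R_F)-\operatorname{Vol}(R_{\Pr})=\int_0^L\phi(L-x_1)\,dx_1=\int_0^{n-1}\phi(s)\,ds,\]
which is a constant $C'$ independent of $L$. Multiplying by $n!/L^n$ yields $\Pr(P\text{ generated on }[0,L])=F(P,L)+C/L^n$ with $C=-n!\,C'\in\mathbb{R}$, as claimed. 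The crux of the argument is this translation-invariance step: one must check that for a brick every bound with $i\ge2$ is a pure translate of an earlier variable, so that both the bulk value $V_0$ and the boundary correction $\phi(s)$ are genuinely $L$-independent. The remaining technical points—that a single endpoint linear extension captures the whole generating region (so that twins cause no over- or under-counting) and that the bounds do not cross—both follow from the material already established for bricks.
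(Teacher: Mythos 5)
Your proposal is correct, and it proves the theorem by a genuinely different route than the paper. The paper's proof splits the \emph{event} probabilistically according to whether $x_{(1)}>L-n$: on $\{x_{(1)}\le L-n\}$ the probability integral reproduces $\frac{(L-n)^n}{L^n}F(P,L-n)$ exactly, while on $\{x_{(1)}>L-n\}$ conditioning and rescaling $[L-n,L]$ to $[0,n]$ shows the contribution is $C'(n/L)^n$; it then needs the full strength of Lemma~\ref{thm:bricksarelinear} (that $L^nF(P,L)=C_PL$ is linear) to convert $F(P,L-n)$ back into $F(P,L)$ plus a constant. You instead compare the two objects as volumes: both $\Pr$ and $F$ are $\frac{n!}{L^n}$ times the volume of a region, the regions differ only in the clipping $x_n\le L$, and the sliver $R_F\cap\{x_n>L\}$ has $L$-independent volume because every bound with $i\ge 2$ is a translate $x_j$ or $x_j+1$, so the slice volume depends only on $s=L-x_1$ and is supported on $s\in[0,n-1]$. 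This makes the geometric origin of the error term explicit (and shows $C\le 0$, consistent with the paper's two-element examples), avoids the conditioning/rescaling step, and does not invoke the linearity conclusion of Lemma~\ref{thm:bricksarelinear} at all --- translation invariance, which is the reason that lemma holds, is used directly, and your $V_0$ computation re-derives it as a byproduct. What the paper's route buys in exchange is that its conditioning-on-order-statistics template is reused almost verbatim in the proof of Theorem~\ref{thm:coefficients}, whereas your region-comparison argument is specific to bricks. Two points of bookkeeping, neither a gap relative to the paper's own level of rigor: your constant is pinned down only for $L\ge n-1$ (the paper's likewise only for $L\ge n$), and both proofs rely on the unproved converse of Lemma~\ref{thm:boundfuncs} together with the tacit fact that for a brick $b(x_i)\ne L$ when $i\ge 2$, which is what your translation-invariance step needs.
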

\begin{proof}
	Let $P$ be a brick. We begin by partitioning the probability into two mutually exculsive events.
    \begin{multline}
        \Pr(P \text{ generated on } [0,L])
        = \Pr(P \text{ generated on } [0,L] \text{ and } x_{(1)}> L-n)\\+\Pr(P \text{ generated on } [0,L] \text{ and } x_{(1)}\leq L-n)
    \end{multline}
    Calculating each of these summands:
    \begin{multline}
        \Pr(P \text{ generated on } [0,L] \text{ and } x_{(1)}> L-n) 
        \\= \Pr(P \text{ generated on } [0,L]\ |\  x_{(1)}> L-n)\Pr(x_{(1)}> L-n)
        \\=\Pr(P \text{ generated on } [L-n,L])\Pr(x_{(1)}> L-n)
        \\=\Pr(P \text{ generated on } [0,n])\Pr(x_{(1)}> L-n)
    \end{multline}
	Since $\Pr(P \text{ generated on } [0,n])$ does not depend on $L$, it must be the case that this probability is constant with respect to $L$. We will set $\Pr(P \text{ generated on } [0,n]) = C'$. Then, $\Pr(x_{(1)}> L-n)$ is simply the probability that each of $x_1,\ldots,x_n$ are in the interval $[L-n,L]$ which is $(\frac{n}{L})^n$. Thus,
	\[\Pr(P \text{ generated on } [0,L] \text{ and } x_{(1)}> L-n)=C'\left(\frac{n}{L}\right)^n=\frac{C}{L^n}\]
    Then, to calculate the second summand, we restrict $x_{(1)}$ to the interval $[0,L-n]$ and for the remaining integrals' bounds, we use the bounds $a$ and $b$ defined above. By Lemma~\ref{thm:boundfuncs}, these bounds ensure that the interval corresponding to each element will be correctly oriented with respect to the other intervals. We also know that if these bounds are satisfied and $P$ is a brick, $x_n-x_1<n$ so in this scenario, $0\leq a(x_i)<b(x_i)\leq L$ for all $i=1,\ldots,n$. In addition, using Theorem~\ref{thm:orderstats} we know the following integral will correctly calculate the given probability.
    \begin{multline}
        \Pr(P \text{ generated on } [0,L] \text{ and } x_{(1)}\leq L-n)
        \\=
        \int_{0}^{L-n}\int_{a(x_2)}^{b(x_2)}\cdots\int_{a(x_n)}^{b(x_n)} \frac{n!}{L^n} d\,x_1\ldots d\,x_n
        =\frac{(L-n)^n}{L^n}F(P,L-n)
    \end{multline}
	We also know by Lemma~\ref{thm:bricksarelinear} that $F(P,L-n)=\frac{C_P (L-n)}{(L-n)^n}$. Therefore,
	\[\Pr(P \text{ generated on } [0,L] \text{ and } x_{(1)}\leq L-n)=\frac{C_P (L-n)}{L^n}\]
    Plugging both of these back into (1),
    \begin{multline}
        \Pr(P \text{ generated on } [0,L])
        = \frac{C}{L^n}+\frac{C_P (L-n)}{L^n} \\= \frac{C_P L}{L^n}+\frac{C-C_P n}{L^n} = \frac{F(P,L)}{L^n}+\frac{C-C_P n}{L^n}
    \end{multline}
    So, the error between $F(L,P)$ and $\Pr(P \text{ generated on } [0,L])$ is $\frac{C-C_P n}{L^n}$ which is a constant with respect to $L$. 
\end{proof}

This theorem clearly implies the following corollary.
\begin{corollary}\label{brickcorr}
    If $P$ is a brick on $n$ elements then $L^n\Pr(P)\sim F(P,L)$.
\end{corollary}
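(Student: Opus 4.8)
The plan is to deduce the statement directly from the two results just established, Theorem~\ref{thm:brickprob} and Lemma~\ref{thm:bricksarelinear}, by forming the ratio that defines the relation $\sim$ and checking that it converges to $1$. Concretely, I would start from the identity furnished by Theorem~\ref{thm:brickprob}, namely $\Pr(P)=F(P,L)+\frac{C}{L^n}$ for some real constant $C$, so that the entire discrepancy between the probability and its approximation $F(P,L)$ is captured by the single term $\frac{C}{L^n}$.

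The one ingredient still needed is the true order of growth of $F(P,L)$, since $\sim$ compares the error term against $F(P,L)$ itself rather than against $1$. This is supplied by Lemma~\ref{thm:bricksarelinear}: because $P$ is a brick, $L^nF(P,L)=C_PL$ with $C_P\neq 0$, whence $F(P,L)=C_P/L^{n-1}$. I would emphasize that the nonvanishing of $C_P$ is the load-bearing hypothesis here. It guarantees that $F(P,L)$ is strictly positive for all large $L$, so that division is legitimate, and it pins the order of $F(P,L)$ at $L^{-(n-1)}$, which is one power of $L$ larger than the error term $\frac{C}{L^n}$.

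With both ingredients in hand the computation is immediate: $\frac{\Pr(P)}{F(P,L)}=\frac{F(P,L)+C/L^n}{F(P,L)}=1+\frac{C/L^n}{C_P/L^{n-1}}=1+\frac{C}{C_PL}$, which tends to $1$ as $L\to\infty$. Hence $\Pr(P)\sim F(P,L)$, and multiplying these asymptotically equivalent positive quantities by $L^n$ yields $L^n\Pr(P)\sim L^nF(P,L)=C_PL$, which is the asserted relation.

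There is no substantive obstacle in this argument, as the corollary is essentially a repackaging of Theorem~\ref{thm:brickprob}. The only subtlety worth flagging is precisely the point isolated above: the conclusion hinges on the error term being of \emph{strictly} smaller order than $F(P,L)$, and this in turn rests entirely on the fact, proved in Lemma~\ref{thm:bricksarelinear}, that the linear coefficient $C_P$ is nonzero for a brick. Were $C_P$ permitted to vanish, $F(P,L)$ could decay at least as fast as the error term and the equivalence $\Pr(P)\sim F(P,L)$ might fail. Accordingly, in a full write-up I would cite the nonvanishing of $C_P$ explicitly, not merely the linearity of $L^nF(P,L)$.
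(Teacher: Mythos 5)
Your proof is correct and is essentially the paper's own argument: the paper simply asserts that Theorem~\ref{thm:brickprob} ``clearly implies'' the corollary, and your write-up supplies exactly the intended details by combining that theorem with Lemma~\ref{thm:bricksarelinear}. Your emphasis on the nonvanishing of $C_P$ --- which forces the error term $C/L^n$ to be of strictly smaller order than $F(P,L)=C_PL^{1-n}$ --- is precisely the load-bearing point, so nothing is missing.
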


Now that we can calculate the probability of generating a brick on $[0,L]$, these brick probabilities can be used to calculate the probabilities of nonbricks. 
\section{Lexicographic sums over a chain}
Let $P$ be an semiorder, which is a lexicographic sum of semiorders $P_1$, $P_2$, \dots, $P_k$ over a chain of length $k$. This section describes how find the asymptotic probability of generating $P$ based on the probabilities of the subsemiorders $P_i$. Recall that these probabilities are always rational functions of $L$ with $L^n$ in the denominator.

\begin{theorem}\label{thm:coefficients} Let $P$ be a semiorder of order $n$, which is a lexicographic sum of bricks $P_1$, $P_2$, \dots, $P_k$ over a chain. Let $c_i$ and $e_i$ defined by
\[
L^{|P_i|}\Pr(P_i)= c_i L +e_i.
\]
Then the probability of generating $P$ on the interval $[0,L]$ is
\[
L^n \Pr(P \text{ generated on } [0,L])=\binom{n}{|P_1|,|P_2|,\ldots,|P_k|}\frac{c_1\cdots c_k }{k!}L^k + E(L)
\]
Where $E(L)$ is a polynomial function of $L$ with $\deg(E)<k$.
\end{theorem}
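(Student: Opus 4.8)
The plan is to turn the probability into a single volume over the order-statistics simplex and then separate "where each brick sits" from "what shape each brick has." Write $m_i=|P_i|$ and $M_i=m_1+\cdots+m_i$. By Theorem~\ref{thm:orderstats} the probability that the $n$ points generate $P$ equals $\frac{n!}{L^n}\,\mathrm{Vol}(R_P)$, where $R_P\subseteq\{0\le x_1\le\cdots\le x_n\le L\}$ is the set of ordered tuples inducing $P$. Because $P$ is a lexicographic sum over a chain of bricks and left endpoints increase along an endpoint linear extension (the corollary on endpoint linear extensions), the block assignment is forced: the smallest $m_1$ points must realize $P_1$, the next $m_2$ must realize $P_2$, and so on, so there is no overcounting. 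The observation on bricks forces a true gap between consecutive blocks, so on $R_P$ the points $x_{M_{i-1}+1}<\cdots<x_{M_i}$ induce the brick $P_i$ while consecutive blocks satisfy $x_{M_i}+1<x_{M_i+1}$.

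I would then decouple position from shape. For block $i$ let $u_i=x_{M_{i-1}+1}$ be its leftmost point, let $z^{(i)}$ collect the remaining $m_i-1$ coordinates as offsets from $u_i$, and let $\sigma_i$ be the span of the block (its largest offset). Since the semiorder relation is translation invariant, "block $i$ induces $P_i$" depends only on $z^{(i)}$, which therefore ranges over a fixed region $S_i$ of finite volume $V_i$ (finite because a brick of size $m_i$ has bounded span $\sigma_i<m_i$). To identify $V_i$ I would run the same parametrization in the single-brick case: it gives $\mathrm{Vol}(R_{P_i})=\int_{S_i}(L-\sigma)\,dz=V_iL-\text{const}$, while Theorem~\ref{thm:brickprob} and the hypothesis give $\mathrm{Vol}(R_{P_i})=\frac{1}{m_i!}(c_iL+e_i)$; matching leading coefficients yields $V_i=c_i/m_i!$.

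With the shapes frozen, the admissible positions obey $0\le u_1$, $u_{i+1}\ge u_i+\sigma_i+1$ for $i<k$, and $u_k+\sigma_k\le L$. The substitution $w_i=u_i-\sum_{j<i}(\sigma_j+1)$ converts these into $0\le w_1\le\cdots\le w_k\le T$ with $T=L-\sum_{j=1}^{k}\sigma_j-(k-1)$, a simplex of volume $T^k/k!$. Therefore
\[
\mathrm{Vol}(R_P)=\frac{1}{k!}\int_{S_1}\!\cdots\!\int_{S_k}\Big(L-\textstyle\sum_{j=1}^{k}\sigma_j-(k-1)\Big)^{k}\,dz^{(1)}\cdots dz^{(k)} .
\]
Expanding the $k$-th power in $L$, the top term integrates to $\frac{L^k}{k!}\prod_i V_i$, and since the $\sigma_i$ and the regions $S_i$ are bounded, every remaining term integrates to a constant times a power of $L$ strictly below $k$. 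Hence $\mathrm{Vol}(R_P)=\frac{L^k}{k!}\prod_i V_i+R(L)$ with $\deg R<k$, and multiplying by $n!$ and inserting $V_i=c_i/m_i!$ gives $L^n\Pr(P)=\frac{n!}{m_1!\cdots m_k!}\,\frac{c_1\cdots c_k}{k!}\,L^k+E(L)=\binom{n}{m_1,\ldots,m_k}\frac{c_1\cdots c_k}{k!}L^k+E(L)$ with $\deg E<k$.

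I expect the main obstacle to be making the variable separation rigorous rather than the algebra: proving that $R_P$ is exactly the union of the forced-block configurations (no alternative grouping of order statistics yields $P$, and the gap inequalities are both necessary and sufficient), and checking that the large-$L$ boundary effects land only in the degree-$<k$ remainder rather than altering the leading coefficient. This is the $k$-block analogue of the $O(L^{-n})$ error bookkeeping already done for one brick in Theorem~\ref{thm:brickprob}; I would either apply that estimate block by block or observe that the a priori piecewise-polynomial volume $\mathrm{Vol}(R_P)$ becomes a genuine polynomial once $L$ exceeds the (bounded) total span plus the $k-1$ gaps, which is the regime relevant for the asymptotics.
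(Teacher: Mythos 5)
Your proposal is correct, but it follows a genuinely different route from the paper's proof. The paper argues by induction on $k$: it conditions on the order statistic $x_{(m)}$ with $m=|P_1|$ via the law of total probability, factors the integrand into a $P_1$-part on $[0,x]$ and a $P\setminus P_1$-part on $[x+1,L]$, splits the integral over the ranges $[0,m]$, $[m,L-n+m]$, $[L-n+m,L]$, and shows that the middle range produces the leading term while the two end ranges contribute only degree $<k$; the key analytic input there is the identity $\Pr(x_{(1)},\ldots,x_{(m)}\text{ form }P_1\mid x_{(m)}=x)=(c_1/m)/x^{m-1}$, which enters the computation of $I_2$. You avoid both the induction and that conditional-probability identity by a single global shape/position decomposition: the forced block assignment and gap inequalities let you write $\mathrm{Vol}(R_P)=\frac{1}{k!}\int_{S_1}\cdots\int_{S_k}\bigl(L-\sum_j\sigma_j-(k-1)\bigr)^k\,dz$, with $V_i=\mathrm{Vol}(S_i)=c_i/|P_i|!$ identified by running the same parametrization on a single brick and matching leading coefficients against the hypothesis. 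Your approach makes the combinatorial constants transparent ($n!$ from the order statistics, $1/\prod_i|P_i|!$ from the shape volumes, $1/k!$ from the position simplex) and yields the exact polynomial identity in one step for all $L$ above an explicit threshold; the paper's induction instead reuses its established brick machinery (Theorem~\ref{thm:brickprob} and Lemma~\ref{thm:bricksarelinear}) and pushes the boundary effects into the error term range by range. Both arguments are really statements about the large-$L$ regime — the hypothesis $L^{|P_i|}\Pr(P_i)=c_iL+e_i$ itself only holds once $L$ exceeds the relevant spans, and for small $L$ the volume is merely piecewise polynomial — and you flag this caveat explicitly, which is appropriate.
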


\begin{proof}
    We will prove this theorem by induction. We will begin with the base case of $k=1$, or equivalently, the case in which $P$ is a brick. By Theorem~\ref{thm:brickprob} we know that the following is true with $\deg(E)=0$.
    \[L^n \Pr(P \text{ generated on } [0,L]) = c_1 L +e_1 = \binom{n}{n}\frac{c_1}{1!}L^1+E(L)\]
    Suppose the theorem is true for all semiorders which are lexicographic sums of $k-1$ bricks over the $(k-1)$-chain. Let $P$ be a semiorder which is a lexicographic sum of $P_1,\ldots,P_k$ over the $k$-chain. Define $m=|P_1|$. We write the following integral using the Law of Total Probability.
    \begin{multline}
        \Pr(P \text{ generated on } [0,L]) \\= \int_0^L \Pr(P \text{ generated on } [0,L]\ |\ x_{(m)}=x)f_{x_{(m)}}(x)\,dx
    \end{multline}
    This probability is the instersection between two independent probabilities.
    \begin{multline}\label{thm:partitionedint}
        \int_0^L \Pr(x_{(1)},\ldots,x_{(m)} \text{ form } P_1 \text{ on } [0,x]\ |\ x_{(m)}=x)\\\Pr(x_{(m+1)},\ldots,x_{(n)}\text{ form } P\setminus P_1 \text{ on } [x+1,L]\ |\ x_{(m)}=x)\\f_{x_{(m)}}(x)\,dx
    \end{multline}
    We will then partition this integral over three intervals: $[0,m],\ [m,L-n+m],\ [L-n+m,L]$. For the first interval, define its integral to be $I_1$.
    \begin{multline}\label{thm:firstint}
        I_1 = \int_0^m \Pr(x_{(1)},\ldots,x_{(m)} \text{ form } P_1 \text{ on } [0,x]\ |\ x_{(m)}=x)\\\Pr(x_{(m+1)},\ldots,x_{(n)}\text{ form } P\setminus P_1 \text{ on } [x+1,L]\ |\ x_{(m)}=x)\\f_{x_{(m)}}(x)\,dx
    \end{multline}
    This interval is constant length with respect to $L$. So, since $P_1$ is a brick, so must $P\setminus x_m$ be. Therefore $L^n \Pr(P_1 \setminus x_m\text{ generated on } [0,L]) = C'L+D'$.
    \begin{multline}
        \Pr(x_{(1)},\ldots,x_{(m)} \text{ form } P_1 \text{ on } [0,x]\ |\ x_{(m)}=x)\\\leq \Pr(x_{(1)},\ldots,x_{(m-1)} \text{ form } P_1\setminus x_m \text{ on } [0,x]\ |\ x_{(m)}=x)\\-\Pr(x_{(1)},\ldots,x_{(m-1)} \text{ form } P_1\setminus x_m \text{ on } [0,x-1]\ |\ x_{(m)}=x)
        \\ \leq \frac{C'x+D'}{x^{m-1}}-\frac{C'(x-1)+D'}{x^{m-1}}
        = \frac{C'}{x^{m-1}}
    \end{multline}

    Plug this back into (\ref{thm:firstint}).
    \begin{multline}
        \int_0^m \Pr(x_{(1)},\ldots,x_{(m)} \text{ form } P_1 \text{ on } [0,x]\ |\ x_{(m)}=x)\\\Pr(x_{(m+1)},\ldots,x_{(n)}\text{ form } P\setminus P_1 \text{ on } [x+1,L]\ |\ x_{(m)}=x)\\f_{x_{(m)}}(x)\,dx
        \\\leq \int_0^m \frac{C'}{x^{m-1}}\Pr(x_{(m+1)},\ldots,x_{(n)}\text{ form } P\setminus P_1 \text{ on } [x+1,L]\ |\ x_{(m)}=x)\\f_{x_{(m)}}(x)\,dx
        \\=\int_0^m \frac{C'}{x^{m-1}}\left[\binom{n-m}{|P_2|,\ldots,|P_k|}\frac{c_2\cdots c_k }{(k-1)!}(L-x-1)^{k-1} + E(L-x-1)\right]\\\left(\frac{1}{L-x}\right)^{n-m}\left(\frac{n!}{(n-m)!(m-1)!}\right)\left(\frac{x}{L}\right)^{m-1}\left(\frac{1}{L}\right)\left(\frac{L-x}{L}\right)^{n-m}\,dx
    \end{multline}
    \begin{multline*}
        =\frac{C'}{L^n}\left(\frac{n!}{(n-m)!(m-1)!}\right)
        \\\int_0^m \left[\binom{n-m}{|P_2|,\ldots,|P_k|}\frac{c_2\cdots c_k }{(k-1)!}(L-x-1)^{k-1} + E(L-x-1)\right]\,dx
    \end{multline*}
    Split this into two integrals.
    \begin{multline*}
        =\frac{C'}{L^n}\left(\frac{n!}{(n-m)!(m-1)!}\right)
        \binom{n-m}{|P_2|,\ldots,|P_k|}\frac{c_2\cdots c_k }{(k-1)!}\int_0^m (L-x-1)^{k-1}\,dx
        \\+\frac{C'}{L^n}\left(\frac{n!}{(n-m)!(m-1)!}\right)
        \int_0^m E(L-x-1)\,dx
    \end{multline*}
    Then perform a substitution with $u=L-x-1$.
    \begin{multline*}
        =\frac{C'}{L^n}\left(\frac{n!}{(n-m)!(m-1)!}\right)
        \binom{n-m}{|P_2|,\ldots,|P_k|}\frac{c_2\cdots c_k }{(k-1)!}\int_{L-m-1}^{L-1} u^{k-1}\,du
        \\+\frac{C'}{L^n}\left(\frac{n!}{(n-m)!(m-1)!}\right)
        \int_{L-m-1}^{L-1} E(u)\,du
        \\=\frac{C'}{L^n}\left(\frac{n!}{(n-m)!(m-1)!}\right)
        \binom{n-m}{|P_2|,\ldots,|P_k|}\frac{c_2\cdots c_k }{(k-1)!}\frac{1}{k}\left[(L-1)^k-(L-m-1)^k\right] 
        \\+\frac{C'}{L^n}\left(\frac{n!}{(n-m)!(m-1)!}\right)(\mathbf{E}_1(L-1)-\mathbf{E}_1(L-m-1))
    \end{multline*}
    Since $\deg(E)<k-1$ we know that the degree of its antiderivative, $\deg(\mathbf{E})<k$. Also notice that in the first summand of this integral, the $L^k$ terms also cancel out. Therefore, the integral defined in (\ref{thm:firstint}) is a polynomial function of $L$, and $\deg(I_1)<k$.

    We define he second partition of the integral in (\ref{thm:partitionedint}) as $I_2$.
    \begin{multline}\label{thm:secondint}
        I_2 = \int_m^{L-n+m} \Pr(x_{(1)},\ldots,x_{(m)} \text{ form } P_1 \text{ on } [0,x]\ |\ x_{(m)}=x)\\\Pr(x_{(m+1)},\ldots,x_{(n)}\text{ form } P\setminus P_1 \text{ on } [x+1,L]\ |\ x_{(m)}=x)\\f_{x_{(m)}}(x)\,dx
        \\=\int_m^{L-n+m} \frac{c_1/m}{x^{m-1}}\left[\binom{n-m}{|P_2|,\ldots,|P_k|}\frac{c_2\cdots c_k }{(k-1)!}(L-x-1)^{k-1} + E(L-x-1)\right]\\\left(\frac{1}{L-x}\right)^{n-m}\left(\frac{n!}{(n-m)!(m-1)!}\right)\left(\frac{x}{L}\right)^{m-1}\left(\frac{1}{L}\right)\left(\frac{L-x}{L}\right)^{n-m}\,dx
        \\=\binom{n}{|P_1|,|P_2|,\ldots,|P_k|}\frac{c_1\cdots c_k }{(k-1)!}\left(\frac{1}{L^n}\right)\int_m^{L-n+m} (L-x-1)^{k-1} \,dx + \mathbf{E}_2(L)
        \\=\binom{n}{|P_1|,|P_2|,\ldots,|P_k|}\frac{c_1\cdots c_k }{(k-1)!}\left(\frac{1}{L^n}\right)\frac{1}{k}\left((L-m-1)^k-(n-m-1)^k\right)+\mathbf{E}_2(L)
        \\=\binom{n}{|P_1|,|P_2|,\ldots,|P_k|}\frac{c_1\cdots c_k }{k!}\left(\frac{L^k}{L^n}\right)+\mathbf{E}_2(L)
    \end{multline}
    Using a similar argument to the last partition of this integral, the portion containing $E(L-x-1)$ only contributes to the $\mathbf{E}_2(L)$ portion which has degree less than $k$.
    
    Finally, we consider the third partition of (\ref{thm:partitionedint}) where $\mathbf{E}_3(L)$ is a polynomial function of degree less than $k$.
    \begin{multline}\label{thm:thirdint}
        I_3 = \int_{L-n+m}^L \Pr(x_{(1)},\ldots,x_{(m)} \text{ form } P_1 \text{ on } [0,x]\ |\ x_{(m)}=x)\\\Pr(x_{(m+1)},\ldots,x_{(n)}\text{ form } P\setminus P_1 \text{ on } [x+1,L]\ |\ x_{(m)}=x)\\f_{x_{(m)}}(x)\,dx
        \\ = \int_{L-n+m}^L \frac{c_1/m}{x^{m-1}}\Pr(x_{(m+1)},\ldots,x_{(n)}\text{ form } P\setminus P_1 \text{ on } [x+1,L]\ |\ x_{(m)}=x)\\\left(\frac{n!}{(n-m)!(m-1)!}\right)\left(\frac{x}{L}\right)^{m-1}\left(\frac{1}{L}\right)\left(\frac{L-x}{L}\right)^{n-m}\,dx
        \\=\left(\frac{n!}{(n-m)!(m-1)!}\right)\frac{c_1/m}{L^n}\int_{L-n+m}^L \Pr(x_{(m+1)},\ldots,x_{(n)}\text{ form } P\setminus P_1 \text{ on }\\ [x+1,L]\ |\ x_{(m)}=x)(L-x)^{n-m}\,dx
        \\=\mathbf{E}_3(L)
    \end{multline}
    
    Add $I_1,\ I_2,$ and $I_3$ back together to reconstruct (\ref{thm:partitionedint}).
    \begin{multline}
        \Pr(P \text{ generated on } [0,L]) \\= \binom{n}{|P_1|,|P_2|,\ldots,|P_k|}\frac{c_1\cdots c_k }{k!}\left(\frac{L^k}{L^n}\right)+\mathbf{E}(L)
    \end{multline}
    Here we have that $\mathbf{E}(L)=\mathbf{E}_1(L)+\mathbf{E}_2(L)+\mathbf{E}_3(L)$ and $\deg(\mathbf{E})<k$. So we have shown the theorem for all $k$.
\end{proof}

This theorem also implies the following few corollaries having to do with asymptotics. 
\begin{corollary}
Let $P$ be a semiorder of order $n$, which is a lexicographic sum of bricks $P_1$, $P_2$, \dots, $P_k$ over a chain. Let $c_i$ and $e_i$ defined by
\[
L^{|P_i|}\Pr(P_i)\sim c_i L^{e_i}.
\]
Then 
$$L^n\Pr(P)\sim \binom{n}{|P_1|,|P_2|,\ldots,|P_k|}\frac{c_1\cdots c_k }{k!}L^k$$.
\end{corollary}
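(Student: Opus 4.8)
The plan is to derive this directly from Theorem~\ref{thm:coefficients}, treating the asymptotic statement as a bookkeeping consequence of the exact polynomial identity proved there. First I would observe that each summand $P_i$ is a brick, so Lemma~\ref{thm:bricksarelinear} (equivalently Theorem~\ref{thm:brickprob}) tells us that $L^{|P_i|}\Pr(P_i)$ is an \emph{exactly} linear polynomial in $L$, say $c_i L + e_i'$, with leading coefficient $c_i \neq 0$. In particular $L^{|P_i|}\Pr(P_i) \sim c_i L$, so the exponent in the corollary's hypothesis is forced to be $e_i = 1$ for every $i$, and the constant $c_i$ appearing here coincides with the $c_i$ of Theorem~\ref{thm:coefficients}. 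I would flag explicitly the mild notational clash between the $e_i$ of the theorem (a constant term) and the $e_i$ of the corollary (an exponent), noting that the latter equals $1$.

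Next I would simply invoke Theorem~\ref{thm:coefficients} with these coefficients to obtain the exact identity
\[
L^n \Pr(P \text{ generated on } [0,L]) = \binom{n}{|P_1|,\ldots,|P_k|}\frac{c_1\cdots c_k}{k!}L^k + E(L),
\]
where $E$ is a polynomial with $\deg(E) < k$.

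Finally, the asymptotic claim follows by comparing degrees. Since each $c_i \neq 0$ and the multinomial coefficient is a positive integer, the leading term is a genuine degree-$k$ polynomial in $L$ with nonzero leading coefficient, whereas the error $E$ has strictly smaller degree. Dividing through and letting $L \to \infty$ annihilates the $E(L)$ contribution, yielding
\[
\frac{L^n \Pr(P)}{\binom{n}{|P_1|,\ldots,|P_k|}\frac{c_1\cdots c_k}{k!}L^k} \longrightarrow 1,
\]
which is exactly the desired asymptotic equivalence.

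I do not anticipate a genuine obstacle: the corollary is essentially a restatement of Theorem~\ref{thm:coefficients} in asymptotic language. The only point requiring care is that the leading coefficient must not vanish, so that the degree-$k$ term truly dominates; this is guaranteed by $c_i \neq 0$ (from Lemma~\ref{thm:bricksarelinear}) together with positivity of the multinomial coefficient.
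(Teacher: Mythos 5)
Your proof is correct and follows exactly the route the paper intends: the paper states this corollary as an immediate consequence of Theorem~\ref{thm:coefficients} without writing out a proof, and your argument supplies precisely that deduction (brickness forces $e_i=1$ and $c_i\neq 0$ via Lemma~\ref{thm:bricksarelinear} and Theorem~\ref{thm:brickprob}, then the degree-$k$ leading term dominates $E(L)$). Your explicit flag of the notational clash between the theorem's $e_i$ (a constant term) and the corollary's $e_i$ (an exponent) is a worthwhile clarification the paper omits.
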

Similarly, we can determine the exponent in the numerator of the probability without knowing exactly which bricks are lexicographically summed to create $P$. 
\begin{corollary}
Let $P$ be a semiorder of order $n$, which is a lexicographic sum of semiorders $P_1$, $P_2$, \dots, $P_k$ over a chain. Let $c_i$ and $e_i$ defined by
\[
L^{|P_i|}\Pr(P_i)\sim c_i L^{e_i}.
\]
Then 
$$L^n\Pr(P)= \Theta\left(L^{e_1+\cdots+e_k}\right).$$
\end{corollary}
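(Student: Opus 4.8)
The plan is to reduce the general statement to the brick case already handled by the preceding corollary, via the canonical decomposition of a semiorder into bricks over a chain. Recall that, by definition, a brick is a semiorder that cannot be written as a lexicographic sum of semiorders over a nontrivial chain. Iterating the lex-sum-over-a-chain decomposition until every summand is indecomposable shows that \emph{every} semiorder $P_i$ is itself a lexicographic sum of some number $m_i$ of bricks $B_{i,1},\ldots,B_{i,m_i}$ over an $m_i$-element chain (the recursion terminates because each proper summand has strictly fewer elements, and the summands are genuine semiorders by the definition of brick).

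First I would pin down each exponent $e_i$. Applying the preceding corollary to the brick decomposition of $P_i$ gives $L^{|P_i|}\Pr(P_i)\sim C_i L^{m_i}$ for a positive constant $C_i$, since the relevant multinomial coefficient, the product of the individual bricks' (nonzero, by Lemma~\ref{thm:bricksarelinear}) linear coefficients, and the factor $1/m_i!$ are all nonzero. Comparing this with the hypothesis $L^{|P_i|}\Pr(P_i)\sim c_i L^{e_i}$ and using the uniqueness of asymptotic order (if $c_iL^{e_i}\sim C_iL^{m_i}$ with $c_i,C_i\neq 0$ then $\tfrac{c_i}{C_i}L^{e_i-m_i}\to 1$, forcing $e_i=m_i$) identifies $e_i$ as exactly the number of bricks in the canonical decomposition of $P_i$.

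Next I would assemble the global brick decomposition of $P$. Since $P$ is the lexicographic sum of $P_1,\ldots,P_k$ over a $k$-chain and each $P_i$ is the lexicographic sum of its $m_i$ bricks over an $m_i$-chain, associativity of the lexicographic sum lets me substitute each $P_i$ by its bricks: the resulting index poset is the lexicographic sum of chains of lengths $m_1,\ldots,m_k$ over a $k$-chain, which is again a chain, now of length $m_1+\cdots+m_k$. Hence $P$ is a single lexicographic sum of the $m_1+\cdots+m_k$ bricks $\{B_{i,j}\}$ over a chain, and the preceding corollary applies to it verbatim, yielding $L^n\Pr(P)\sim C\,L^{m_1+\cdots+m_k}$ for a positive constant $C$. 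Because $m_1+\cdots+m_k=e_1+\cdots+e_k$ by the first step and $f\sim cL^e$ with $c>0$ is equivalent to $f=\Theta(L^e)$, we conclude $L^n\Pr(P)=\Theta\!\left(L^{e_1+\cdots+e_k}\right)$.

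The point that needs the most care is the structural reduction itself: verifying that the canonical brick decomposition really is a lexicographic sum over a \emph{single} chain, and that the two-level lexicographic sum collapses associatively so that the preceding corollary can be invoked on $P$ directly. Once this is established the asymptotic bookkeeping is entirely routine. A secondary, easily dispatched point is confirming that each $c_i$ (and hence the final constant $C$) is nonzero, which is exactly what guarantees a genuine $\Theta$ rather than a mere upper bound.
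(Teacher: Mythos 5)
Your proposal is correct and follows essentially the same route as the paper's own proof: both identify each $e_i$ with the number of bricks in the canonical brick decomposition of $P_i$, collapse the nested lexicographic sums so that $P$ is a lexicographic sum of $e_1+\cdots+e_k$ bricks over a single chain, and then apply Theorem~\ref{thm:coefficients} (equivalently, the preceding corollary) to $P$. Your write-up merely makes explicit two points the paper treats as immediate: the uniqueness of the asymptotic exponent forcing $e_i=m_i$, and the associativity of lexicographic sums.
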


Further, since the asymptotic probability of a lexicographic sum of bricks over a chain is determined by the asymptotic probabilities of those bricks, we can state the following corollary as well.

\begin{corollary}
    Let $P$ and $P'$ be semiorders of order $n$, which are lexicographic sums of bricks $P_1$, $P_2$, \dots, $P_k$ and $P'_1$, $P'_2$, \dots, $P'_k$, respectively, over a chain. Let $c_i, c'_i$ and $e_i, e'_i$ be defined by
    \[
    L^{|P_i|}\Pr(P_i)\sim c_i L^{e_i} \text{ and } L^{|P'_i|}\Pr(P'_i)\sim c'_i L^{e'_i}.
    \]
    If there exists a bijective map $\alpha: \{1,\ldots,k\}\to \{1,\ldots,k\}$ such that $\Pr(P_i)\sim \Pr(P'_{\alpha(i)})$, then $\Pr(P)\sim \Pr(P')$.

\end{corollary}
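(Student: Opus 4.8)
The plan is to reduce everything to the leading-order formula supplied by Theorem~\ref{thm:coefficients} and then observe that the bijection $\alpha$ preserves exactly the data that determines that leading term. Since $P$ and $P'$ are both lexicographic sums of $k$ bricks over a chain, Theorem~\ref{thm:coefficients} (discarding the error term $E(L)$ of degree $<k$) gives
\[
L^n\Pr(P)\sim\binom{n}{|P_1|,\ldots,|P_k|}\frac{c_1\cdots c_k}{k!}L^k
\quad\text{and}\quad
L^n\Pr(P')\sim\binom{n}{|P'_1|,\ldots,|P'_k|}\frac{c'_1\cdots c'_k}{k!}L^k,
\]
so it will suffice to show these two leading coefficients are equal, the exponents $k$ and $n$ being the same for both.

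First I would unpack the hypothesis. Each $P_i$ and each $P'_{\alpha(i)}$ is a brick, so by Theorem~\ref{thm:brickprob} together with Lemma~\ref{thm:bricksarelinear} we have $\Pr(P_i)\sim c_i L^{1-|P_i|}$ with $c_i\neq 0$, and likewise $\Pr(P'_{\alpha(i)})\sim c'_{\alpha(i)}L^{1-|P'_{\alpha(i)}|}$ with $c'_{\alpha(i)}\neq 0$. The assumption $\Pr(P_i)\sim\Pr(P'_{\alpha(i)})$ says the ratio of these two expressions tends to $1$; since that ratio equals $(c_i/c'_{\alpha(i)})\,L^{|P'_{\alpha(i)}|-|P_i|}$, a finite nonzero limit forces the power of $L$ to vanish and the constant to be $1$. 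Hence $|P_i|=|P'_{\alpha(i)}|$ and $c_i=c'_{\alpha(i)}$ for every $i$.

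Next I would exploit that $\alpha$ is a bijection. Because $|P_i|=|P'_{\alpha(i)}|$ for all $i$ and $\alpha$ permutes the index set, the multiset of block sizes $\{|P_1|,\ldots,|P_k|\}$ coincides with $\{|P'_1|,\ldots,|P'_k|\}$; since a multinomial coefficient is a symmetric function of its lower arguments, $\binom{n}{|P_1|,\ldots,|P_k|}=\binom{n}{|P'_1|,\ldots,|P'_k|}$. Similarly $c_1\cdots c_k=\prod_i c'_{\alpha(i)}=c'_1\cdots c'_k$ by reindexing the product along $\alpha$. Therefore the two leading coefficients displayed above are identical, so $L^n\Pr(P)$ and $L^n\Pr(P')$ are asymptotic to the same monomial, which yields $\Pr(P)\sim\Pr(P')$.

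The only step requiring genuine care is the extraction of the two equalities $|P_i|=|P'_{\alpha(i)}|$ and $c_i=c'_{\alpha(i)}$ from the single relation $\Pr(P_i)\sim\Pr(P'_{\alpha(i)})$; everything afterward is bookkeeping with symmetric functions of the indices. The point to keep honest is that $c_i,c'_{\alpha(i)}\neq 0$ (guaranteed by Lemma~\ref{thm:bricksarelinear}), which is what makes the ratio well defined and lets the limit argument separate the exponent from the coefficient.
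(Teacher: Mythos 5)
Your proof is correct and takes essentially the approach the paper intends: the corollary is stated as an immediate consequence of Theorem~\ref{thm:coefficients}, the point being that the leading term $\binom{n}{|P_1|,\ldots,|P_k|}\frac{c_1\cdots c_k}{k!}L^k$ depends only on data preserved by $\alpha$. Your extraction of $|P_i|=|P'_{\alpha(i)}|$ and $c_i=c'_{\alpha(i)}$ from the single relation $\Pr(P_i)\sim\Pr(P'_{\alpha(i)})$ (using $c_i,c'_{\alpha(i)}\neq 0$, guaranteed by Lemma~\ref{thm:bricksarelinear}) is precisely the detail the paper leaves implicit.
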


Finally, we show that the integral we defined in Subsection~\ref{comp} is asmyptotic to the probability for nonbricks. 
\begin{corollary}
    If $P$ is a semiorder, $\Pr(P)\sim F(P,L)$.
\end{corollary}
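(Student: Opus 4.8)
The plan is to reduce the statement to the brick case, which is already settled, and then to a comparison of leading coefficients. Every semiorder $P$ of order $n$ decomposes canonically as a lexicographic sum of bricks $P_1,\dots,P_k$ over a chain, with $|P_i|=m_i$ and $m_1+\cdots+m_k=n$. When $k=1$ the claim is exactly Corollary~\ref{brickcorr}, so assume $k\ge 2$. Theorem~\ref{thm:coefficients} gives $L^n\Pr(P)\sim\binom{n}{m_1,\dots,m_k}\frac{c_1\cdots c_k}{k!}L^k$, where $c_i$ is the leading coefficient of $L^{m_i}\Pr(P_i)=c_iL+e_i$. Since Theorem~\ref{thm:brickprob} shows $L^{m_i}\Pr(P_i)$ differs from $L^{m_i}F(P_i,L)$ by a constant, and Lemma~\ref{thm:bricksarelinear} shows $L^{m_i}F(P_i,L)=c_iL$ exactly, the same constants $c_i$ are also the leading coefficients of $L^{m_i}F(P_i,L)$. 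It therefore suffices to prove
\[
L^nF(P,L)\sim\binom{n}{m_1,\dots,m_k}\frac{c_1\cdots c_k}{k!}L^k .
\]

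First I would establish a factorization of the integral defining $F$ for a lexicographic sum $P=P_1\oplus P'$ over a $2$-chain, where $P'=P_2\oplus\cdots\oplus P_k$ has order $n'=n-m_1$ (here $\oplus$ denotes the lexicographic sum over a $2$-chain). Writing an endpoint linear extension of $P$ as an endpoint linear extension of $P_1$ followed by one of $P'$, the key observation is that the bounding functions $a,b$ of every element of $P'$, viewed inside $P$, agree with those of $P'$ as a stand-alone semiorder under the affine identification sending the value $0$ to $z_{m_1}+1$ and $L$ to $L$, where $z_{m_1}$ is the last left endpoint of $P_1$. Substituting $u\mapsto u-(z_{m_1}+1)$ in the inner $n'$ integrals then yields
\[
L^nF(P,L)=\frac{n!}{n'!}\int_{R_1}(L-z_{m_1}-1)^{n'}\,F\bigl(P',\,L-z_{m_1}-1\bigr)\,dz,
\]
where $R_1$ is the bounding region of $P_1$. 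Verifying this identification case by case against the definition of $a$ and $b$ is the main technical obstacle; the delicate point is checking that the lower bound of each minimal element of $P'$ is governed by $z_{m_1}+1$ rather than by the preceding variable, which hinges on whether $I_Q\cap U_Q(z_{m_1})$ is empty.

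With the factorization in hand, I would finish by induction on $k$. Applying the inductive hypothesis $(L')^{n'}F(P',L')\sim\binom{n'}{m_2,\dots,m_k}\frac{c_2\cdots c_k}{(k-1)!}(L')^{k-1}$ with $L'=L-z_{m_1}-1$ reduces the problem to evaluating $\int_{R_1}(L-z_{m_1}-1)^{k-1}\,dz$. Integrating out every variable of $P_1$ except the top one leaves $\int_0^L(L-t-1)^{k-1}\psi(t)\,dt$, where $\psi$ is the density of the last left endpoint of $P_1$. Because a brick on $m_1$ points spans an interval of length less than $m_1$, neither boundary of $[0,L]$ is active once $t\ge m_1$, so translation invariance forces $\psi(t)=c_1/m_1!$ on $[m_1,L]$, with only an $O(L^{k-1})$ contribution from $[0,m_1]$; hence the integral equals $\frac{c_1}{m_1!}\cdot\frac{L^k}{k}+O(L^{k-1})$. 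Collecting the constants,
\[
\frac{n!}{n'!}\cdot\frac{n'!}{m_2!\cdots m_k!}\cdot\frac{1}{m_1!}\cdot\frac{1}{(k-1)!}\cdot\frac1k=\binom{n}{m_1,\dots,m_k}\frac1{k!},
\]
reproduces exactly the leading term of Theorem~\ref{thm:coefficients}, so $L^nF(P,L)\sim L^n\Pr(P)$ and therefore $F(P,L)\sim\Pr(P)$. The main obstacle is the bookkeeping in the factorization lemma together with the translation-invariance claim for $\psi$; once these are secured, the asymptotic matching is forced term by term, with all boundary effects absorbed into strictly lower-order contributions.
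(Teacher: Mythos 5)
Your argument is correct, but it reaches the conclusion by a genuinely different route than the paper. The paper never computes the asymptotics of $F(P,L)$ directly for non-bricks: it truncates the upper bound functions, replacing $b(x_{i_1})=L$ by $L-(n-i_1)$ for the first element of each brick, so that the resulting integral $F'(P,L)$ equals \emph{exactly} the probability of generating $P$ with those elements away from the right edge of $[0,L]$; it then bounds $\Pr(P)-F'(P,L)=O(L^{k-1-n})$ probabilistically and asserts that $F'$ and $F$ share their leading coefficient. You instead compute the leading term of $L^nF(P,L)$ itself, via the factorization identity $L^nF(P,L)=\frac{n!}{n'!}\int_{R_1}(L-z_{m_1}-1)^{n'}F\bigl(P',L-z_{m_1}-1\bigr)\,dz$ and induction on the number of bricks, then match it against the leading term of $L^n\Pr(P)$ from Theorem~\ref{thm:coefficients}. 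Your two technical claims do hold: the case analysis of $a$ and $b$ on elements of $P'$ works out (for a minimal element of $P'$ other than its first element, the first element of $P'$ lies in $I_Q\cap U_Q(z_{m_1})$, so the lower bound falls back to the preceding variable, exactly as in the stand-alone $P'$), and the slice-volume claim $\psi(t)=c_1/m_1!$ on $[m_1,L]$ is valid because, by Lemma~\ref{thm:bricksarelinear}, in a brick only $x_1$ has bounds involving $L$, so all active constraints on the slice are translation invariant and the total volume $c_1L/m_1!$ pins down the constant. Two points should be tightened: (i) phrase the induction hypothesis as an exact polynomial statement ($L'^{\,n'}F(P',L')$ is a polynomial in $L'$ of degree $k-1$ with the stated leading coefficient) rather than a bare asymptotic, since inside the integral $L'=L-z_{m_1}-1$ is not uniformly large (it even goes negative near $z_{m_1}=L$) and a $\sim$ relation cannot be integrated; this costs nothing because $L^nF(P,L)$ is a polynomial by construction; (ii) note $c_i>0$ so the matched leading terms are nonzero. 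As for what each approach buys: the paper's is shorter and stays probabilistic, at the cost of leaving two steps as assertions (that $F'$ is exactly a probability, and that $F$ and $F'$ share leading coefficients); yours repeats the multinomial bookkeeping of Theorem~\ref{thm:coefficients} but makes the comparison fully explicit, and your factorization identity has independent value, since it shows $F$ itself satisfies the same recursion over lexicographic sums that the probabilities do.
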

\begin{proof}
    We showed this corollary for bricks in Corollary~\ref{brickcorr}. We can now extend this to non-bricks. Let $P$ be a lexicographic sum of bricks $P_1$, $P_2$, \dots, $P_k$ over a chain. From Theorem~\ref{thm:coefficients}, we know that $L^n\Pr(P)\sim CL^k$ for some $C\in\mathbb{R}$. Construct $F(P,L)$ using the lower and upper bound functions. Note that the first element $x_{i_1}\in P_i$ in any endpoint linear extension will have $b(x_{i_1})=L$. 

    For the purposes of this proof, reassign $b(x_{i_1})=L-(n-i_1)$. Then define $F'(P,L)$ to use these new upper bounds. This calculates the following probability exactly:
    \[F'(P,L)=\Pr(P \text{ and }  x_1<L-n+1 \text{ and } \ldots \text{ and } x_{k_1}<L-(n-k_1))\]
    Then we can establish the following equation.
    \[\Pr(P)=F'(P,L)+\Pr(P \text{ and } x_{i_1}>L-(n-i_1) \text{ for at least one }i=1,\ldots, k)\]
    Note that if any $x_{i_1}>L-(n-i_1)$, then $x_{k_1}>L-(n-i_1)>L-n$. So we know an upper bound this probability. 
    \begin{multline*}
    \Pr(P \text{ and } x_{i_1}>L-(n-i_1) \text{ for at least one }i=1,\ldots, k)
    \\\leq \Pr(P \text{ and } x_{k_1}>L-n)
    \\\leq \Pr(P\setminus P_k)\Pr(P_k \text{ on } [L-n,L])
    \\\leq \Pr(P\setminus P_k)\Pr(P_k \text{ on } [0,n]) = \Theta(L^{k-1-n})
    \end{multline*}
    So it is clear that every term of $L^{k-n}$ in $\Pr(P)$ must be supplied from $F'(P,L)$. Since $F'(P,L)$ necessarily shares its  leading coefficient with $F(P,L)$ due to the nature of the integral, we have shown that $\Pr(P)\sim F(P,L)$.
\end{proof}

\section{Ladders}
In this section, we will focus on a special type of bricks called \emph{ladders}. Ladders are important because, as subposets, they restrict the maximum possible range for a representation of a given brick.
\begin{definition}
    A semiorder $P$ with endpoint linear extension $L=(x_1,\ldots,x_n)$ is called the \emph{ladder} on $n$ elements if for all $x_i\in P$, $D(x_i)=\{x_j\in P:\, j<i-1\}$ and $U(x_i)=\{x_j\in P:\, j>i+1\}$.
\end{definition}
The probability coefficients of generating ladders follows a famous number sequence called the Euler numbers, Up/Down numbers, or Zigzag numbers. We give a robust overview of this sequence in the following subsection. 
\subsection{Up/Down Numbers}
This sequence of numbers count a variety of scenarios. The $n$-th term of this sequence, $A_n$ represents each of the following.
\begin{itemize}
    \item half the number of alternating permutations on $n$ letters \cite{andre}
    \item the number of linear extensions of the zig-zag poset on $n$ elements \cite{stanley}
    \item the number of increasing $0-1-2$ trees on $n$ vertices \cite{callan}
\end{itemize}

Many more uses of this sequence can be found on OEIS page A000111. These numbers have both a recursive definition and an exponential generating function.

\begin{definition}
    The Up/Down number sequence is defined by the following recursion.
    \[2A_{n+1}=\sum_{k=0}^{n}\binom{n}{k}A_kA_{n-k}\] 
    The sequence also has exponential generating function $\sec(x)+\tan(x)$. 
\end{definition}

Additionally, the ratio of $nA_{n-1}$ to $A_n$ as $n$ goes to infinity has a closed form solution \cite{limit}.
\[\lim_{n\to\infty}\frac{nA_{n-1}}{A_n}=\frac{\pi}{2}\]

For our purposes, the most important scenario that this sequence can be used to represent is outlined in the following theorem stated on OEIS. We provide a proof for completeness. 

\begin{theorem}\label{thm:eulergivesalt}
    The probability that $n$ numbers $\{v_1,\ldots,v_n\}$ chosen uniformly from $[0,1]$ satisfy $v_1>v_2<v_3>\ldots v_n$ is $\frac{A_n}{n!}$.
\end{theorem}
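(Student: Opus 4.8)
The plan is to prove this statement by exhibiting a direct bijection between the region of $[0,1]^n$ where the alternating inequality holds and a combinatorial structure counted by $A_n$, thereby reducing the probability computation to a counting problem. Since the $n$ values are chosen uniformly and independently from $[0,1]$, with probability $1$ they are distinct, so I may restrict attention to the $n!$ equally likely orderings (permutations) of the sample. Each ordering corresponds to a region of equal volume $1/n!$ in the cube $[0,1]^n$. Therefore the desired probability equals $N/n!$, where $N$ is the number of permutations $\sigma$ of $\{1,\ldots,n\}$ whose induced pattern on $(v_1,\ldots,v_n)$ satisfies the alternating relation $v_1>v_2<v_3>\cdots$. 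The whole proof then hinges on identifying $N=A_n$.

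To establish $N=A_n$, I would appeal to one of the combinatorial characterizations of the Up/Down numbers already cited in the paper. The cleanest route is to observe that the permutations counted by $N$ are precisely the \emph{alternating} (zigzag) permutations of $\{1,\ldots,n\}$ starting with a descent, i.e. permutations $w_1 w_2 \cdots w_n$ with $w_1>w_2<w_3>\cdots$. The paper has already stated that $A_n$ equals the number of linear extensions of the zigzag poset on $n$ elements, and equivalently (via Andr\'e's classical result, also cited) that $A_n$ counts one of the two symmetric classes of alternating permutations. A linear extension of the zigzag poset on $n$ elements is exactly a labeling realizing the alternating comparisons, so these objects are in natural bijection with the sample orderings satisfying $v_1>v_2<v_3>\cdots$. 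Thus $N=A_n$, and the probability is $A_n/n!$.

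For rigor I would make the measure-theoretic reduction explicit: let $S\subseteq[0,1]^n$ be the open set where all coordinates are distinct, which has full measure, and partition $S$ into the $n!$ cells indexed by the relative order of the coordinates. The uniform distribution assigns each cell measure $1/n!$ by symmetry (any permutation of coordinates is a measure-preserving map of the cube). The event $\{v_1>v_2<v_3>\cdots\}$ is, up to a null set, the union of exactly those cells whose defining permutation is alternating of the prescribed type, and there are $A_n$ of them; summing their measures gives $A_n/n!$.

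The main obstacle is not any hard estimate but rather pinning down the bijection between sample orderings and the objects $A_n$ is known to count, and matching the convention for which end of the alternation is a descent versus an ascent. The recursion $2A_{n+1}=\sum_k \binom{n}{k}A_kA_{n-k}$ counts alternating permutations without distinguishing the starting direction, so the factor of $2$ (equivalently, André's ``half the number of alternating permutations'') must be tracked carefully to confirm that the single class $v_1>v_2<v_3>\cdots$ contributes $A_n$ and not $2A_n$ or $A_n/2$. Once the convention is fixed, the argument is routine; I would double-check it against small cases such as $n=2$ (where the event is simply $v_1>v_2$, with probability $1/2=A_2/2!$) and $n=3$.
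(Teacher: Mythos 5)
Your proposal is correct and follows essentially the same route as the paper's proof: both reduce the event to the rank permutation of the sample (each of the $n!$ orderings being equally likely), identify the favorable orderings as the alternating permutations beginning with a descent, and invoke the fact that there are exactly $A_n$ of these. Your version merely adds an explicit measure-theoretic justification and a check of the descent/ascent convention, both of which the paper leaves implicit.
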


\begin{proof}
    We choose $n$ numbers $\{v_1,\ldots,v_n\}$ uniformly from $[0,1]$. Let $\{v_{(1)},\ldots,v_{(n)}\}$ be the order statistics of this sampling.

    Define the map $\sigma: \{v_1,\ldots,v_n\}\to \{1,\ldots,n\}$ by $\sigma(v_i)=j$ if $v_i=v_{(j)}$. The permutation $(\sigma(v_1),\sigma(v_2),\ldots,\sigma(v_n))$ is an alternating permutation of $\{1,2,\ldots,n\}$ begining with a decrease if and only if $\{v_1,\ldots,v_n\}$ satisfy $v_1>v_2<v_3>\ldots v_n$. Therefore there are $A_n$ permutations which satisfy this condition. Thus,
    \[\Pr(v_1>v_2<v_3>\ldots v_n)= \frac{A_n}{n!}\]
\end{proof}
\subsection{Ladder Probabilities}
Using the previous definitions of the Up/Down numbers, we can prove the following theorem pertaining to the probability of generating a ladder on $n$ elements.

\begin{theorem}\label{thm:eulerladder}
    If $P$ is a ladder of order $n$, then $L^n\Pr(P)\sim n A_{n-1} L$ where $A_{n-1}$ is the $(n-1)^{th}$  Up/Down Number.
\end{theorem}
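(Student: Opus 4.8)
The plan is to compute the leading coefficient of $L^nF(P,L)$ explicitly and recognize it as $nA_{n-1}$; since a ladder is a brick, Corollary~\ref{brickcorr} then transfers the asymptotics to $\Pr(P)$. First I would note that a ladder is indeed a brick: consecutive elements $x_{i-1},x_i$ are incomparable, so their representing intervals must overlap, whence the union of all intervals is a single interval of $\mathbb{R}$ with no gap. By the Observation in the introduction this is exactly the defining property of a brick, so Lemma~\ref{thm:bricksarelinear} and Corollary~\ref{brickcorr} apply.

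Next I would read off the bounding functions of Subsection~\ref{lowerupper} for the endpoint linear extension $(x_1,\ldots,x_n)$. A direct check using the definition of the ladder (the only element incomparable to $x_i$ and below it is $x_{i-1}$, while the largest element of its downset is $x_{i-2}$, for which $I_Q(x_i)\cap U_Q(x_{i-2})=\emptyset$) gives $a(x_1)=0$, $b(x_1)=L$, then $a(x_2)=x_1$, $b(x_2)=x_1+1$, and for $i\ge 3$, $a(x_i)=x_{i-2}+1$ and $b(x_i)=x_{i-1}+1$. Consequently,
\[
L^nF(P,L)=\int_0^L\Bigl(\int_{x_1}^{x_1+1}\int_{x_1+1}^{x_2+1}\cdots\int_{x_{n-2}+1}^{x_{n-1}+1} n!\,dx_n\cdots dx_2\Bigr)dx_1 .
\]

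The key manoeuvre is the change of variables $d_i=x_i-x_{i-1}$ for $i=2,\ldots,n$, which has Jacobian $1$ and converts the bracketed inner integral into $n!$ times the volume of
\[
R=\{(d_2,\ldots,d_n)\in(0,1)^{n-1}: d_{i-1}+d_i>1 \text{ for } i=3,\ldots,n\},
\]
a region independent of $x_1$. Hence the inner integral is constant in $x_1$, so $L^nF(P,L)=n!\,\mathrm{vol}(R)\,L$, reconfirming the linear form of Lemma~\ref{thm:bricksarelinear} and reducing the problem to showing $n!\,\mathrm{vol}(R)=nA_{n-1}$, i.e. $\mathrm{vol}(R)=A_{n-1}/(n-1)!$.

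Finally I would evaluate $\mathrm{vol}(R)$ via the measure-preserving involution $w_i=d_i$ for $i$ odd and $w_i=1-d_i$ for $i$ even. Under it each constraint $d_{i-1}+d_i>1$ becomes a strict inequality between $w_{i-1}$ and $w_i$ whose direction alternates with the parity of $i$, so $R$ maps volume-preservingly onto the set of $(n-1)$-tuples in $(0,1)^{n-1}$ obeying an up/down alternating pattern. By Theorem~\ref{thm:eulergivesalt} (after the reflection $w_i\mapsto w_{n+1-i}$ if needed to match the stated down-up convention, which preserves the count) this volume is $A_{n-1}/(n-1)!$. Therefore $n!\,\mathrm{vol}(R)=n!\cdot A_{n-1}/(n-1)!=nA_{n-1}$, and Corollary~\ref{brickcorr} yields $L^n\Pr(P)\sim nA_{n-1}L$. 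I expect the main obstacle to be this last step: spotting the alternating-sign involution that identifies the "consecutive-sum" polytope $R$ with the alternating region of Theorem~\ref{thm:eulergivesalt}. The bound-function bookkeeping and the Jacobian computation are routine by comparison.
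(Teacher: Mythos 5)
Your proposal is correct and takes essentially the same route as the paper: the paper sets up the identical iterated integral from the bound functions and then applies a single volume-preserving affine change of variables (a lower-triangular matrix of determinant $\pm 1$ plus a shift vector) which is precisely the composition of your difference substitution $d_i = x_i - x_{i-1}$ with your parity-based complementation involution, reducing the inner integral to the alternating-pattern volume $A_{n-1}/(n-1)!$ of Theorem~\ref{thm:eulergivesalt}. Your write-up is somewhat more careful about the bookkeeping (explicitly checking that ladders are bricks, deriving the bound functions, and addressing the up-down versus down-up convention), but the core argument is the same.
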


\begin{proof}
    By Theorem~\ref{thm:brickprob} and since $P$ is a ladder, we can construct the following integral.
    \begin{multline}\label{thm:ladderprob}
        \frac{L^n}{n!} \Pr(P)=
        \int_{0}^{L}
            \int_{x_1}^{x_1+1}
                    \int_{x_1+1}^{x_2+1}
                        \int_{x_2+1}^{x_3+1}
                            \cdots
                    \int_{x_{n-2}+1}^{x_{n-1}+1}
                    \,dx_n
                    \ldots
                    \,dx_4
                \,dx_3
            \,dx_2
        \,dx_1+e
    \end{multline}
    By Theorem~\ref{thm:eulergivesalt}, the probability that $n$ numbers $\{v_1,\ldots,v_n\}$ chosen uniformly from $[0,1]$ satisfy $v_1>v_2<v_3>\ldots v_n$ is $A_n/n!$. Writing the integral which calculates this probability gives us the following. 
    \begin{equation}
    \label{thm:altint}
    {\frac{A_{n-1}}{(n-1)!}=
    \int_{0}^{1}
        \int_{0}^{v_1}
            \int_{v_2}^{1}
                \cdots
            \,dv_3
        \,dv_2
    \,dv_1}
    \end{equation}
    Notice that both (\ref{thm:ladderprob}) and (\ref{thm:altint}) calculate the volumes of polyhedra. We will transform the variables $x_2,\ldots, x_n$ using a scaling matrix $\boldsymbol{A}$ with $|\det(\boldsymbol{A})|$ and a shifting vector. Then we can show that these integrals calculate the same volume. We will first define $\boldsymbol{A}$. 
    \[
    \boldsymbol{A}=\left[ {\begin{array}{ccccccc}
        1 & 0 & 0 & 0 &\cdots & \cdots & 0 \\
        1 & -1 & 0 & 0 &\cdots & \cdots & 0\\
        0 & -1 & 1 & 0 &\cdots & \cdots & 0\\
        0 & 0 & 1 & -1 &\cdots & \cdots & 0\\
        \vdots &&&&\ddots&& \vdots\\
        0 & 0 & 0 & 0 &\cdots & (-1)^{n-1} & (-1)^n\\
        \end{array} } \right]
    \]
    Using this matrix and a shift vector, we define the following linear transformation on $\vec{x}$.
    \[
        \left[ {\begin{array}{c}
            v_1\\
            v_2\\
            v_3\\
            v_4\\
            \vdots\\
            v_{n-1}\\
            \end{array} } \right]=
            \boldsymbol{A}
        \left[ {\begin{array}{c}
        x_2\\
        x_3\\
        x_4\\
        x_5\\
        \vdots\\
        x_n\\
        \end{array} } \right]+
        \left[ {\begin{array}{c}
        -x_1\\
        1\\
        0\\
        1\\
        0\\
        \vdots\\
        \end{array} } \right]
    \]
    
    Since the scaling matrix $\boldsymbol{A}$ is lower triangular, its determinant is the product of the entries in its diagonal. Therefore $|\det(\boldsymbol{A})|=1$. This means that the volume of the polyhedron defined by (\ref{thm:ladderprob}) is preserved. The vector $\vec{v}$ generated by this transformation on the truncated version of $\vec{x}$ is shown below.
    \[
        \left[ {\begin{array}{c}
            v_1\\
            v_2\\
            v_3\\
            v_4\\
            v_5\\
            \vdots\\
            \end{array} } \right]=
        \left[ {\begin{array}{c}
        x_2-x_1\\
        1-x_3+x_2\\
        x_4-x_3\\
        1-x_5+x_4\\
        x_6-x_5\\
        \vdots\\
        \end{array} } \right]
    \]
    Applying this transformation to the inner integrals of (\ref{thm:ladderprob}) yeilds the following.
    \[
            \int_{x_1}^{x_1+1}
                \int_{x_1+1}^{x_2+1}
                    \int_{x_2+1}^{x_3+1}
                        \cdots
                \,dx_4
            \,dx_3
        \,dx_2
        =
            \int_{0}^{1}
                \int_{v_1}^{0}
                    \int_{v_2}^{1}
                        \cdots
                \,dv_3
            \,dv_2
        \,dv_1\]
        Depending on the parity of $n$, this integral may be positive or negative, but since the bounds are all the same, just in a different order, the integrals are equal up to a factor of $-1$.
        \[
        \bigg|\int_{0}^{1}
        \int_{v_1}^{0}
            \int_{v_2}^{1}
                \cdots
        \,dv_3
    \,dv_2
\,dv_1\bigg|=\int_{0}^{1}
        \int_{0}^{v_1}
            \int_{v_2}^{1}
                \cdots
            \,dv_3
        \,dv_2
    \,dv_1=\frac{A_{n-1}}{(n-1)!}\]
    We replace the inner integrals from (\ref{thm:ladderprob}) with the previous result.
    \[\frac{L^n}{n!} \Pr(P)=
    \int_{0}^{L}
        \frac{A_{n-1}}{(n-1)!}
    \,dx_1+e
    =
    \frac{A_{n-1}}{(n-1)!}L+e\]
    Therefore we have shown that $\Pr(P)=\frac{n A_{n-1}L+e}{L^n}$ which implies the conclusion of the theorem.

    \[L^n \Pr(P)\sim n A_{n-1}L\]
\end{proof}

\section{Open problems}

We have yet to show non-integral formulas for any bricks which are not ladders. Though, for some ladders with twins, our data suggests their exponential generating functions. 

Let a semiorder on $n$ elements be a \emph{ladder with a twin in the $i^{th}$ position} if when one of the twins is removed, the semiorder is the ladder on $n-1$ elements and the remaining twin is in the $i^{th}$ position in every endpoint linear extension. From the probabilities we have calculated, the pattern suggests the following.
\begin{enumerate}
    \item If $P$ is a ladder with a twin in the $1^{st}$ position, then $L^n\Pr(P)\sim nA_{n-1}L$ where $A_n$ has exponential generating function $(x-1)(\sec(x)+\tan(x))$.
    \item If $P$ is a ladder with a twin in the $2^{nd}$ position, then $L^n\Pr(P)\sim nA_{n-1}L$ where $A_n$ has exponential generating function $(2-x)(\sec(x)+\tan(x))$.
\end{enumerate}
These patterns hold through at least $n=20$ for both of these cases. Unfortunately, an exponential generating function for ladders with twins in the $3^{rd}$ position or higher have not been conjectured. The natural variations on the exponential generating functions listed above do not satisfy our calculated probabilities.


\bibliographystyle{plain}
\bibliography{references}

\begin{thebibliography}{10}

\bibitem{andre}
D{\'e}sir{\'e} Andr{\'e}.
\newblock Sur les permutations altern\'ees.
\newblock {\em Journal de Math{\'e}matiques Pures et Appliqu{\'e}es}, 7(3),
  1881.

\bibitem{callan}
David Callan.
\newblock A note on downup permutations and increasing 0-1-2 trees.
\newblock 2005.

\bibitem{fishburn}
Peter~C. Fishburn.
\newblock {\em Interval Orders and Interval Graphs: A Study of Partially
  Ordered Sets}.
\newblock John Wiley \& Sons Inc, 1985.

\bibitem{kellertrotter}
Mitchel~T. Keller and William~T. Trotter.
\newblock {\em Applied Combinatorics}.
\newblock CreateSpace Independent Publishing Platform, 2017.

\bibitem{uniformrandom}
Patryk Kozie{\l} and Ma{\l}gorzata Sulkowska.
\newblock Uniform random posets.
\newblock {\em Inform. Sci.}, 515:294--301, 2020.

\bibitem{Luczak}
Tomasz {\L}uczak.
\newblock First order properties of random posets.
\newblock {\em Order}, 8(3):291--297, 1991.

\bibitem{sheldonross}
Sheldon Ross.
\newblock {\em A First Course in Probability}.
\newblock Pearson, 2018.

\bibitem{scottsuppes}
Dana Scott and Patrick Suppes.
\newblock Foundational aspects of theories of measurement.
\newblock {\em The Journal of Symbolic Logic}, 23(2), 1958.

\bibitem{limit}
Richard~P. Stanley.
\newblock A survey of alternating permutations.
\newblock In {\em Combinatorics and graphs}, volume 531 of {\em Contemp.
  Math.}, pages 165--196. Amer. Math. Soc., Providence, RI, 2010.

\bibitem{stanley}
Richard~P. Stanley.
\newblock {\em Enumerative Combinatorics, Volume I}.
\newblock Cambridge University Press, 2011.

\bibitem{trotter}
William~T. Trotter.
\newblock {\em Combinatorics and Partially Ordered Sets: Dimension Theory}.
\newblock The Johns Hopkins University Press, 1992.

\end{thebibliography}

\end{document}